\numberwithin{equation}{section}
\setlist[enumerate,1]{label={\rm(\arabic*)}, ref={\rm\arabic*}} 
\tikzset{snake it/.style={decorate, decoration=snake}}
\theoremstyle{plain}
\newtheorem{thm}{Theorem}[section]
\newtheorem{cor}[thm]{Corollary}
\newtheorem{lem}[thm]{Lemma}
\newtheorem{prop}[thm]{Proposition}
\newtheorem{conj}[thm]{Conjecture}
\theoremstyle{definition}
\theoremstyle{remark}
\newtheorem{rmk}[thm]{Remark}
\newcommand{\BA}{{\mathbb{A}}}
\newcommand{\BC}{{\mathbb{C}}}
\newcommand{\BP}{{\mathbb{P}}}
\newcommand{\BQ}{{\mathbb{Q}}}
\newcommand{\BZ}{{\mathbb{Z}}}
\newcommand{\CD}{{\mathcal D}}
\newcommand{\CF}{{\mathcal F}}
\newcommand{\CG}{{\mathcal G}}
\newcommand{\CH}{{\mathcal H}}
\newcommand{\CO}{{\mathcal O}}
\newcommand{\CP}{{\mathcal P}}
\newcommand{\CQ}{{\mathcal Q}}
\newcommand{\CV}{{\mathcal V}}
\newcommand{\CW}{{\mathcal W}}
\DeclareFontFamily{OT1}{rsfs}{}
\DeclareFontShape{OT1}{rsfs}{n}{it}{<-> rsfs10}{}
\DeclareMathAlphabet{\curly}{OT1}{rsfs}{n}{it}
\def\lra{\longrightarrow}
\DeclareMathOperator{\Coh}{Coh}
\DeclareMathOperator{\Perv}{Perv}
\DeclareMathOperator{\DR}{DR}
\DeclareMathOperator{\gr}{gr}
\DeclareMathOperator{\Sym}{Sym}
\DeclareMathOperator{\HM}{HM}
\DeclareMathOperator{\codim}{codim}
\DeclareMathOperator{\id}{id}
\DeclareMathOperator{\pr}{pr}
\DeclareMathOperator{\even}{even}
\newcommand{\supth}[1]{\ensuremath{#1^{\mathrm{th}}}}
\title{Perverse--Hodge complexes for Lagrangian fibrations}
\author{Junliang Shen}
\address{Department of Mathematics, Yale University, New Haven, CT 06511, USA}
\email{junliang.shen@yale.edu}
\author{Qizheng Yin}
\address{BICMR, Peking University, Beijing 100871, China}
\email{qizheng@math.pku.edu.cn}
\begin{document}


\maketitle

\begin{prelims}

\DisplayAbstractInEnglish

\bigskip

\DisplayKeyWords

\medskip

\DisplayMSCclass

\end{prelims}


\newpage

\setcounter{tocdepth}{1}

\tableofcontents


\section{Introduction}

\subsection{Perverse--Hodge symmetry}
For a compact irreducible symplectic variety\footnote{We say that $M$ is irreducible symplectic if it is a compact K\"ahler manifold such that $H^0(M,\Omega_M^2)$ is spanned by a nowhere degenerate $2$-form.} $M$ of dimension $2n$ with a Lagrangian fibration $\pi\colon M\to B$, the decomposition theorem, \textit{cf.}~\cite{BBD}, 
\begin{equation}\label{decomp0}
R\pi_* \BQ_M[2n]  \simeq \bigoplus_{i=-n}^n P_i[-i], \quad P_i = {^\mathfrak{p}\CH}^i\left( R\pi_* \BQ_M[2n] \right)\in \Perv(B)
\end{equation}
provides important invariants for the topology of $\pi$. A \emph{perverse--Hodge}
symmetry was proven in \cite{SY}, connecting the cohomology of the perverse sheaves $P_i$ with the Hodge numbers of $M$.

\begin{thm}[\textit{cf.} \cite{SY}]\label{thm1}
We have
\begin{equation}\label{P=F}
    h^{j-n}(B, P_{i-n}) = h^{i,j}(M).
\end{equation}
Here $h^*(-)$ stands for $\dim H^*(-)$, and $h^{i,j}(-)$ denotes the Hodge number.
\end{thm}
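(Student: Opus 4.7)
The plan is to translate both sides of the claimed equality into invariants of $H^{i+j}(M,\mathbb{Q})$ and compare them there. On the topological side, taking hypercohomology in \eqref{decomp0} and using the (automatic) degeneration of the perverse Leray spectral sequence gives
\[
H^{j-n}(B, P_{i-n}) \cong \mathrm{Gr}^P_{i-n} H^{i+j}(M, \mathbb{Q}),
\]
where $P^{\bullet}$ denotes the perverse Leray filtration associated with $\pi$. On the Hodge side, $h^{i,j}(M)$ equals $\dim \mathrm{Gr}^F_i H^{i+j}(M, \mathbb{C})$. Theorem~\ref{thm1} is therefore equivalent to the dimension identity
\[
\dim \mathrm{Gr}^P_{k} H^m(M) = \dim \mathrm{Gr}^F_{k+n} H^m(M) \text{ for all } k, m,
\]
a ``perverse equals Hodge'' statement at the level of graded dimensions.

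The first substantive step is to lift \eqref{decomp0} to Saito's category of polarizable pure Hodge modules, so that each $P_i$ acquires a Hodge module structure $P_i^{\mathrm{H}}$ carrying its own Hodge filtration. The base case is Matsushita's theorem, $R^j\pi_* \CO_M \cong \Omega^j_B$, which, read through $\mathrm{Gr}^F_0$ of Saito's pushforward, identifies the bottom piece of the Hodge filtration on each $P_i^{\mathrm{H}}$ with a sheaf of holomorphic forms on $B$. This settles the column $i = 0$ of the desired identity.

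To propagate to the remaining columns of the Hodge diamond of $M$, I would combine two symmetries. First, relative hard Lefschetz for $\pi$ gives an $\mathfrak{sl}_2$-action producing isomorphisms $P_{-i} \cong P_i$ via cup product with a $\pi$-ample class, thereby organizing the summands of \eqref{decomp0}. Second, the Looijenga--Lunts--Verbitsky $\mathfrak{g}$-action on $H^*(M, \mathbb{Q})$ furnishes global Lefschetz-type symmetries that interact nontrivially with the Hodge decomposition. Together with the Hodge module structure on the $P_i^{\mathrm{H}}$, these two sets of symmetries should transport the Matsushita identification across bidegrees and produce the required equality of dimensions.

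The main obstacle, as I see it, is bridging the perverse filtration, which is purely topological, and the Hodge filtration, which is analytic. Saito's Hodge module upgrade of \eqref{decomp0} provides this bridge in principle, but extracting a dimension-wise equality requires showing that the Hodge filtration on each $P_i^{\mathrm{H}}$ is generated from its bottom piece under the relative Lefschetz action, and that the LLV symmetries act compatibly with this Hodge module structure. Establishing these compatibilities is where the holomorphic symplectic and Lagrangian hypotheses on $M$ must enter in an essential way.
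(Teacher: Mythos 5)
Your opening reduction is correct and matches the starting point of the argument: the decomposition theorem identifies $H^{j-n}(B,P_{i-n})$ with the $(i-n)$\textsuperscript{th} graded piece of the perverse (Leray) filtration on $H^{i+j}(M,\BQ)$, so the claim is exactly that the perverse and Hodge filtrations on each $H^m(M)$ have the same graded dimensions after a shift by $n$. But the mechanism you propose for proving this does not work, and it misses the one idea the actual proof turns on. The proof in \cite{SY} (reproduced in refined form in Section~5 of this paper) does not use Hodge modules, Matsushita's theorem, or any ``generation from the bottom piece'' at all: it identifies \emph{both} filtrations as weight filtrations of semisimple elements of the LLV algebra $\mathfrak{g}(M)\simeq\mathfrak{so}(b_2(M)+2)$. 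Verbitsky's theorem gives the Hodge grading as the weight decomposition for $H_F=-\sqrt{-1}[L_{\omega_J},\Lambda_{\omega_K}]$; the key new input, \cite[Theorem~3.1]{SY}, shows that the perverse grading is the weight decomposition for $H_P=-\sqrt{-1}[L_{\eta+\beta},\Lambda_{-\sqrt{-1}(\eta-\beta)}]$, where $\eta$ is $\pi$-relatively ample with $q_M(\eta)=0$ and $\beta$ is pulled back from $B$. The isotropy $q_M(\eta)=0$ is precisely where the Lagrangian hypothesis enters. Since $H_P$ and $H_F$ are conjugate inside $\mathfrak{g}(M)$ (concretely, by a Weyl group element of the $\mathfrak{so}(6)$ they generate together with $H$), the graded dimensions agree.

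The two specific steps you flag as needing verification would in fact fail as stated. Relative hard Lefschetz (cup product with $\eta$) moves between the summands $P_{-i}$ and $P_i$, i.e.\ it shifts the \emph{perverse} degree; it gives no control over the Hodge filtration on the cohomology of a fixed $P_i$, so it cannot ``propagate'' Matsushita's identification across the columns of the Hodge diamond. (What moves you across Hodge degrees is cupping with $\sigma$ and $\overline{\sigma}$, i.e.\ Verbitsky's $\mathfrak{so}(5)$.) Likewise, the Hodge filtration on $P_i^H$ is not generated from its bottom piece by any Lefschetz operator, and no such generation statement is needed or used. Without the identification of the perverse filtration as an LLV weight filtration --- which is a genuinely global cohomological statement, not a consequence of the Hodge-module lift of \eqref{decomp0} --- your outline has no bridge between the topological and Hodge sides, so the central step of the proof is missing.
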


The identity \eqref{P=F} governs the cohomology of the Lagrangian base, the invariant cohomology of a nonsingular fiber of $\pi$, and the Gokapumar--Vafa theory of $K3$ surfaces; we refer to \cite{SY, FSY, HLSY, HM} for more discussions on Theorem~\ref{thm1} and its applications.

The purpose of this paper is to explore and propose a \emph{categorification} of the perverse--Hodge symmetry. It suggests that Theorem~\ref{thm1} should conceptually be viewed as a cohomological shadow of a sheaf-theoretic symmetry for Lagrangian fibrations with possibly noncompact ambient spaces $M$. It is a mysterious phenomenon since all existing proofs of~\eqref{P=F}, \textit{cf.}~\cite{SY, HLSY, HM}, rely heavily on the global cohomological properties of compact irreducible symplectic manifolds, and they do not ``explain'' why such a categorification should exist. Our formulation uses perverse--Hodge complexes constructed from Hodge modules. 

\subsection{Perverse--Hodge complexes}

Let $(M,\sigma)$ be a nonsingular quasi-projective symplectic variety of dimension $2n$. Here $\sigma$ is a \emph{closed} nowhere degenerate holomorphic~$2$-form on $M$. Let~$\pi\colon M \to B$ be a proper \emph{Lagrangian fibration} onto a nonsingular base~$B$; \textit{i.e.}, the restriction of the symplectic form $\sigma$ to regular part of a fiber vanishes. Interesting examples of $\pi$ include Lagrangian fibrations of compact irreducible symplectic varieties, \textit{cf.}~\cite{Be}, and Hitchin's integrable systems, \textit{cf.}~\cite{Hit, Hit1}.

By Saito's theory \cite{S1, S2}, the decomposition theorem \eqref{decomp0} can be upgraded to an identity in the bounded derived category of Hodge modules. Let $\BQ_M^H[2n]$ be the trivial Hodge module, \textit{i.e.}, the pure Hodge module associated with the shifted trivial local system~$\BQ_M[2n]$. We have
\begin{equation}\label{DT}
\pi_+\BQ_M^H[2n] \simeq \bigoplus_{i=-n}^{n} P_i^H[-i],\quad P_i^H = \CH^i\left(\pi_+\BQ_M^H[2n]\right).
\end{equation}
The Hodge module $P_i^H$ consists of a regular holonomic (left-)$\CD_B$-module $\CP_i$ equipped with a good filtration~$F_\bullet \CP_i$; it corresponds to the perverse sheaf $P_i$ under the Riemann--Hilbert correspondence. The increasing filtration $F_\bullet \CP_i$ induces an increasing filtration on the de Rham complex
\[
\DR(\CP_i) = \left[\CP_i \lra \CP_i\otimes\Omega^1_B \lra  \cdots \lra \CP_i \otimes \Omega^n_B\right]\![n].
\]
The associated graded pieces are natural objects in the bounded derived category of coherent sheaves on $B$,
\[
\gr^F_k \DR(\CP_i) \in D^b\Coh(B).
\]

Up to re-indexing and shifting, we define
\[
\CG_{i, k} := \gr^F_{-k} \DR(\CP_{i - n})[n - i].\]
We call $\CG_{i,k}$ the \emph{perverse--Hodge complexes} associated with the Lagrangian fibration $\pi$; here~$i$ is the perverse degree, and $k$ is the Hodge degree. The object $\CG_{i,k}$ is nontrivial only if~\mbox{$0 \leq i,k \leq 2n$}.

Our main proposal is the following conjectural symmetry between perverse--Hodge complexes.

\begin{conj}\label{conj}
Let $\pi\colon M\to B$ be a Lagrangian fibration. We have
\[
\CG_{i, k} \simeq \CG_{k, i} \in D^b\Coh(B).
\]
\end{conj}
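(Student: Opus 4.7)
The plan is to verify the conjecture first on the smooth locus of $\pi$ using variations of Hodge structure together with the symplectic form, and then to attempt to extend the resulting isomorphism across the discriminant via Saito's formalism.

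First I would restrict to the open $B^\circ = B \setminus \Delta$ over which $\pi$ is smooth. By Liouville--Arnold the fibers $A_b = \pi^{-1}(b)$ are Lagrangian tori (abelian varieties in the projective setting), and $R^i\pi_*\BQ_M|_{B^\circ}$ underlies a polarizable variation of Hodge structure. Each $P_{i-n}^H$ is smooth over $B^\circ$, so $\CG_{i,k}|_{B^\circ}$ reduces (up to shift) to the Hodge piece $R^{i-k}\pi_*\Omega^k_{M/B}|_{B^\circ}$ of the relative Hodge decomposition. The symplectic form $\sigma$ induces $TM \cong \Omega^1_M$; restricted to the Lagrangian fiber $A_b$, the conormal sequence produces a canonical identification $\Omega^1_{A_b} \cong \pi^*\Omega^1_{B,b}$, hence $H^{1,0}(A_b) \cong T^*_{B,b}$. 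Combining this with $H^*(A_b) = \wedge^* H^1(A_b)$ and decomposing exterior powers by Hodge type yields a natural identification that swaps the two indices, proving $\CG_{i,k}|_{B^\circ} \simeq \CG_{k,i}|_{B^\circ}$. The case $k=0$ recovers Matsushita's theorem $R^i\pi_*\CO_M \cong \Omega^i_B$ on the smooth locus.

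Next I would tackle the extension across $\Delta$. Assuming each $P_{i-n}^H$ is the intermediate extension of its smooth restriction (expected for Lagrangian fibrations onto smooth bases under mild assumptions on $\Delta$), Saito's theory determines the Hodge filtration $F_\bullet \CP_{i-n}$ near $\Delta$ from the Kashiwara--Malgrange $V$-filtration. The strategy is to promote the smooth-locus identifications to morphisms $\CG_{i,k} \to \CG_{k,i}$ defined globally on $B$ and to verify they are isomorphisms using the purity of the Hodge modules together with symmetry properties of the limiting mixed Hodge structures along $\Delta$; in the compact irreducible symplectic setting, these symmetries are controlled by the Looijenga--Lunts--Verbitsky action on $H^*(M)$.

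The main obstacle is this extension step. Saito's Hodge filtration on an intermediate extension is defined inductively through the $V$-filtration and is notoriously hard to compute explicitly near the discriminant, especially when the monodromy is non-unipotent or when $\Delta$ is not a simple normal crossing divisor --- both of which occur in natural examples such as Hitchin systems. A cleaner approach would be to find an \emph{intrinsic} characterization of $\CG_{i,k}$, for instance through a Koszul-type self-duality of $\pi_+\BQ_M^H$ induced by $\sigma$, from which the symmetry would be manifest. Absent such a structural argument, one would verify the conjecture in explicit families (Hilbert schemes, Beauville--Mukai and Hitchin systems, O'Grady-type examples), which is the approach adopted in the remainder of the paper.
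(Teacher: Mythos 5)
The statement you are addressing is Conjecture~\ref{conj}, which the paper itself does not prove: it only establishes the smooth case (Theorem~\ref{thm0.3}), the case $k=2n$ (Proposition~\ref{prop2.3}), Hilbert schemes of elliptic surfaces (Theorem~\ref{thm0.4}), and the cohomological shadow (Theorem~\ref{thm0.6}). Your proposal correctly identifies the two-stage structure --- smooth locus first, then extension across the discriminant --- and honestly concedes that the second stage is unresolved, so it is a strategy rather than a proof. But there is also a concrete gap in the part you do claim to establish. On $B^\circ$, identifying $H^{1,0}(A_b)$ with $T_{B,b}$ via $\sigma$ and $H^{0,1}(A_b)$ with $\Omega^1_{B,b}$ (which additionally requires a polarization, \textit{cf.}~\cite[Lemma 2.6]{Ma3}) gives a term-by-term matching of the graded pieces, but $\CG_{i,k}$ and $\CG_{k,i}$ are \emph{complexes}, and you must check that the index swap commutes with the differentials $\overline{\nabla}$ induced by the Gauss--Manin connection. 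This is the actual content of the paper's Theorem~\ref{thm0.3}: the commutativity (Proposition~\ref{prop1.4}) rests on the Donagi--Markman cubic condition, \textit{i.e.}, the fact that $\overline{\nabla}\colon \CV^{1,0} \to \Omega^1_B \otimes \Omega^1_B$ comes from a section of $\Sym^3\Omega_B^1$ (Lemma~\ref{lem1.2}, Corollary~\ref{cor1.3}). Your sketch never invokes this, and without it the fiberwise identification does not assemble into a morphism of complexes.

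For the extension step, your working hypothesis that each $P_{i-n}^H$ is the intermediate extension of its restriction to $B^\circ$ is false in general: the decomposition theorem for a Lagrangian fibration typically has summands supported on proper subvarieties of $B$, as the paper emphasizes for $S^{[n]} \to B^{(n)}$, where every stratum $\overline{C^{(n)}_\nu}$ carries nontrivial summands. So even complete control of the $V$-filtration near $\Delta$ for the minimal extensions would not determine the $\CG_{i,k}$; the boundary-supported Hodge modules contribute to both sides and must be matched separately (the paper does this via the PHS formalism of Propositions~\ref{prop3.1}, \ref{Prop3.3}, and~\ref{Prop3.5}, where the Tate twists by the codimension are essential). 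Moreover, the computation in Section~\ref{Sect2.4} shows that already for an elliptic surface the isomorphism \eqref{toy} holds only in $D^b\Coh(B)$ and not termwise, so any strategy that tries to ``promote'' the smooth-locus isomorphism of complexes across $\Delta$ is bound to fail at the level of complexes; the derived category is essential. Your closing suggestion of an intrinsic, duality-based characterization is appealing but is not supplied, and the paper offers no such argument either.
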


As in Theorem~\ref{thm0.6}, Conjecture~\ref{conj} categorifies a refined version of Theorem~\ref{thm1} when $M$ is a compact irreducible symplectic variety. By Proposition~\ref{prop2.3}, it also recovers Matsushita's result \cite{Ma3} on the higher direct images of $\CO_M$.

\subsection{Main results}
We provide evidence for Conjecture~\ref{conj} and verify it in several cases.

\subsubsection{Smooth morphisms}
Our first theorem verifies Conjecture~\ref{conj} when $\pi\colon M \to B$ is smooth. In fact, we obtain a stronger result in this case.

\begin{thm}\label{thm0.3}
Assume that $\pi\colon M \to B$ is smooth. The symplectic form $\sigma$ on~$M$ together with a po\-lar\-i\-za\-tion induces an isomorphism
\[
\CG_{i, k} \xrightarrow{\;\simeq\;} \CG_{k,i}
\]
at the level of complexes.
\end{thm}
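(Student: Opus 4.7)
The plan is to make the perverse--Hodge complexes $\CG_{i,k}$ explicit in the smooth case, and to read off the symmetry directly from the Lagrangian structure. When $\pi$ is smooth and proper, Saito's decomposition~\eqref{DT} coincides with the decomposition theorem for the variations of Hodge structure $R^i\pi_*\BQ_M$. Accordingly, $\CP_{i-n}$ is the flat bundle $R^i\pi_*\BQ_M\otimes_\BQ \CO_B$ with its Gauss--Manin connection, the Saito filtration is the Hodge filtration of the weight-$i$ VHS, and its Hodge bundles are $\gr^F_{-k}\CP_{i-n}\simeq R^{i-k}\pi_*\Omega^k_{M/B}$. By Griffiths transversality, $\gr^F_{-k}\DR(\CP_{i-n})$ is thus an $\CO_B$-linear complex whose $j$-th term is $R^{i-k+j}\pi_*\Omega^{k-j}_{M/B}\otimes \Omega_B^j$ and whose differential is cup product with the Kodaira--Spencer class $\kappa\colon T_B\to R^1\pi_*T_{M/B}$.

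Next I apply the hypotheses on $\pi$. The symplectic form induces the Lagrangian isomorphism $T_{M/B}\simeq \pi^*\Omega_B^1$, whence $\Omega^\ell_{M/B}\simeq \pi^*\!\bigwedge^\ell T_B$ and $R^q\pi_*\Omega^\ell_{M/B}\simeq R^q\pi_*\CO_M\otimes \bigwedge^\ell T_B$ by the projection formula. The polarization, applied to the family of smooth Lagrangian fibers (which are abelian varieties), furnishes the canonical identification $R^1\pi_*\CO_M\simeq \Omega_B^1$---the smooth case of Matsushita's theorem---and hence $R^q\pi_*\CO_M\simeq \Omega_B^q$. Substituting and performing the shift, each term of $\CG_{i,k}$ takes the form $\Omega_B^{a_1}\otimes \bigwedge^{a_2}T_B \otimes \Omega_B^{a_3}$ with $a_1+a_2=i$ and $a_2+a_3=k$, while the corresponding term of $\CG_{k,i}$ is obtained by exchanging $a_1$ and $a_3$. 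This produces a natural term-by-term isomorphism $\CG_{i,k}\simeq \CG_{k,i}$ by transposing the two external $\Omega_B^?$ factors.

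What remains, and is the main obstacle, is to verify that this transposition intertwines the differentials. Under the identifications above, $\kappa$ becomes a global section of $(\Omega_B^1)^{\otimes 3}$, whose three legs act by wedge on the two external $\Omega_B^?$ factors and by contraction against $\bigwedge^? T_B$. By the classical cubic-form symmetry of the period map for Lagrangian fibrations (Donagi--Markman), this section lies in $\Sym^3\Omega_B^1$; in particular it is invariant under the transposition of its two external legs, which is precisely the required compatibility. Assembling these steps yields the desired isomorphism of complexes.
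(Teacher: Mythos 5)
Your proposal is correct and follows essentially the same route as the paper: the terms are identified using the abelian-fiber structure, the symplectic form, and the polarization (your $R^q\pi_*\Omega^\ell_{M/B}\simeq \Omega_B^q\otimes\wedge^\ell T_B$ is the paper's Lemma~\ref{lem1.1} together with the isomorphism \eqref{sigma}), and the intertwining of differentials is deduced from the Donagi--Markman cubic condition exactly as in Corollary~\ref{cor1.3} and Proposition~\ref{prop1.4}. The only difference is notational---you phrase the differential as cup product with the Kodaira--Spencer class on relative forms, while the paper works with $\overline{\nabla}$ on the wedge powers of the weight-one variation of Hodge structures---and these are the same maps.
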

Theorem~\ref{thm0.3} is essentially a reformulation of a result of Donagi and Markman \cite{DM} on the polarized variation of Hodge structures associated with the family. Both complexes~$\CG_{i,k}$ and~$\CG_{k,i}$ have the same length, and a term-by-term isomorphism is constructed between them. The mystery of Conjecture~\ref{conj} is an ``extension'' of this isomorphism to the singular fibers. As we see from Section~\ref{Sect2.4}, in general such an extension is complicated, and the derived category is essential for the formulation.

\subsubsection{Hilbert schemes}
Next, we consider the Lagrangian fibration
\begin{equation}\label{Hilb}
\pi^{[n]}\colon S^{[n]} \lra S^{(n)} \lra B^{(n)}
\end{equation}
induced by an elliptic fibration of a symplectic surface $\pi\colon S \to B$. Typical examples include: 
\begin{enumerate}[(i)]
    \item $\pi\colon S \to \BP^1$ is an elliptic $K3$, and $\pi^{[n]}\colon S^{[n]} \to (\BP^1)^{(n)} = \BP^n$ is a Lagrangian fibration of the compact irreducible symplectic variety $S^{[n]}$; and
    \item $\pi\colon  S \to \BA^1$ and the induced morphisms $\pi^{[n]}\colon  S^{[n]} \to (\BA^{1})^{(n)} = \BA^n$ are the Hitchin fibrations associated with five families of moduli of parabolic Higgs bundles labeled by certain affine Dykin diagrams; \textit{cf.}~\cite{Gr,Z}.
\end{enumerate}

\begin{thm}\label{thm0.4}
Conjecture~\ref{conj} holds for \eqref{Hilb} for any $n \geq 1$.
\end{thm}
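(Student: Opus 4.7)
The plan is to reduce the theorem to the elliptic surface case $n=1$, bootstrapping via the product $S^n\to B^n$, descending to the symmetric product $S^{(n)}\to B^{(n)}$, and finally passing to $S^{[n]}\to B^{(n)}$ through the Hilbert--Chow crepant resolution $\rho\colon S^{[n]}\to S^{(n)}$. The geometric content is concentrated in the base case; the remainder is bookkeeping, provided one keeps careful track of how the perverse and Hodge filtrations transform under each step.

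\textbf{Base case $n=1$.} For an elliptic symplectic surface $\pi\colon S\to B$ one has $\dim S=2$ and $\dim B=1$, so the perverse and Hodge indices $i,k$ range in $\{0,1,2\}$. The diagonal cases are trivial, and the symmetries $\CG_{0,k}\simeq\CG_{k,0}$ are equivalent to Matsushita's theorem on $R^j\pi_*\CO_S$ via Proposition~\ref{prop2.3}. The only substantial case is $\CG_{1,2}\simeq\CG_{2,1}$. Over the smooth locus the required isomorphism is provided by Theorem~\ref{thm0.3}; to extend it across the singular fibers, I would use Kodaira's classification to write down $\pi_+\BQ_S^H[2]$ explicitly near each singular fiber, compute the filtered de Rham pieces term by term, and construct the comparison map by hand, verifying it is a quasi-isomorphism by a stalk check.

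\textbf{Products and symmetric products.} For $\pi^n\colon S^n\to B^n$, the K\"unneth theorem for Hodge modules yields
\[
\pi^n_+\BQ_{S^n}^H[4n] \simeq \bigl(\pi_+\BQ_S^H[2]\bigr)^{\boxtimes n},
\]
so each perverse-Hodge complex $\CG^{(n)}_{i,k}$ decomposes as a direct sum, over splittings $\sum i_j=i$ and $\sum k_j=k$, of external products $\CG_{i_1,k_1}\boxtimes\cdots\boxtimes\CG_{i_n,k_n}$. The base-case symmetry is compatible with external products and is manifestly equivariant, so it induces an $\mathfrak{S}_n$-equivariant isomorphism $\CG^{(n)}_{i,k}\simeq\CG^{(n)}_{k,i}$. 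Pushing down to $B^{(n)}$ and taking $\mathfrak{S}_n$-invariants then gives the statement for the symmetric-product fibration $S^{(n)}\to B^{(n)}$.

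\textbf{Hilbert scheme and main obstacle.} To pass from $S^{(n)}$ to $S^{[n]}$, I would invoke a Hodge-module refinement of G\"ottsche's formula (equivalently, the BKRH/Haiman description of $\rho_*$) to express $\pi^{[n]}_+\BQ_{S^{[n]}}^H[4n]$ as a sum, indexed by partitions $\lambda\vdash n$, of contributions assembled from the smaller symmetric-product fibrations $\pi^{(\lambda_j)}$, with perverse and Hodge shifts prescribed by the combinatorics of $\lambda$. Each summand inherits the symmetry from the previous step, and reassembling them yields the claim. The main obstacle lies in producing such a description of $\rho_*$ at the level of filtered $\CD$-modules whose shifts in perverse degree and Hodge degree are \emph{simultaneously} compatible with the involution $(i,k)\leftrightarrow(k,i)$. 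Establishing this compatibility --- which ultimately reflects the hyper-K\"ahler geometry of $S^{[n]}$ --- is where the real work resides.
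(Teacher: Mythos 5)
Your overall architecture --- settle the elliptic surface, pass to products and symmetric products, then handle the Hilbert--Chow morphism via a G\"ottsche-type decomposition --- is the same as the paper's (its Propositions~\ref{prop3.1}, \ref{Prop3.3}, \ref{Prop3.5} are exactly your middle steps). But the two steps where you locate the ``real work'' both contain genuine gaps.

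\emph{Base case.} For $n=1$ the off-diagonal cases are $(i,k)=(0,1),(0,2),(1,2)$. Proposition~\ref{prop2.3} covers precisely $k=2n=2$, i.e.\ $(0,2)$ and $(1,2)$; so the case you single out as the only substantial one, $\CG_{1,2}\simeq\CG_{2,1}$, is exactly the Matsushita case, while the case that actually remains is $\CG_{0,1}\simeq\CG_{1,0}$, which you dismiss by citing Proposition~\ref{prop2.3} --- but that proposition says nothing about it. You have the hard and easy cases interchanged. Moreover, your plan for the remaining case (Kodaira classification, term-by-term comparison, stalk check) runs into the difficulty that Section~\ref{Sect2.4} makes explicit: already for a nodal fiber, $\gr^F_0\DR(\CP_0)$ is a genuine two-term complex and the isomorphism \eqref{toy} holds only in $D^b\Coh(B)$, so there is no term-by-term map to extend, and a fiber-by-fiber analysis over all Kodaira types would be long and is not carried out in your proposal. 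The paper instead settles $\CG_{1,0}$ in one line from the duality \eqref{duality}: $\gr^F_0\DR(\CP_0)\simeq R\mathcal{H}\mathrm{om}_{\CO_B}\bigl(\gr^F_{-2}\DR(\CP_0),\omega_B[1]\bigr)\simeq\Omega^1_B[1]$, the last step being Matsushita again via \eqref{2.3_2}; no information about the singular fibers is needed.

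\emph{Hilbert--Chow step.} You correctly name the decomposition over partitions and correctly identify the crux --- that the perverse and Hodge shifts of each summand must be simultaneously compatible with $(i,k)\leftrightarrow(k,i)$ --- but you then declare this ``where the real work resides'' and stop. That is the step a proof must supply, and it is not an open-ended obstacle: the G\"ottsche--S\"orgel decomposition of the semismall map $f\colon S^{[n]}\to S^{(n)}$ already holds for pure Hodge modules, with summands ${\kappa_\nu}_+\BQ^H_{S^{(\nu)}}[2|\nu|](|\nu|-n)$, and after composing with $\pi^{(n)}$ each summand is pushed into the base $B^{(n)}$ along a finite surjection followed by a closed embedding of codimension $n-|\nu|$. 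Proposition~\ref{prop3.1}\eqref{prop3.1-1} shows that such a pushforward preserves the symmetry exactly when the Hodge modules carry a Tate twist by minus the codimension, since the twist shifts the Hodge index by the same amount $c$ by which the embedding shifts the perverse index. The needed numerical coincidence is that the stratum has codimension $2(n-|\nu|)$ in $S^{(n)}$ but $n-|\nu|$ in $B^{(n)}$, so the G\"ottsche--S\"orgel twist $(|\nu|-n)$ equals $-\codim$ in the base. Without verifying this equality your argument does not close; with it, there is nothing left to do.
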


The decomposition theorem associated with \eqref{Hilb} has many supports besides the full base~$B^{(n)}$. In particular, the isomorphism of Conjecture~\ref{conj} in this case is not merely an extension of the isomorphism of Theorem~\ref{thm0.3} for variations of Hodge structures. Semisimple objects of the decomposition theorem \eqref{DT} supported on the ``boundary'' of $B^{(n)}$ contribute nontrivially.

\subsubsection{Global cohomology}
Lastly, we consider Lagrangian fibrations $\pi\colon  M \to B$ associated with compact irreducible symplectic varieties.\footnote{As the base $B$ is assumed to be nonsingular, by a result of Hwang \cite{Hw} we know that $B$ is a projective space. However, this fact will not be used in this paper.} Since $B$ is projective, the (hyper-)cohomology groups of the perverse--Hodge complexes are finite-dimensional. 

The following theorem shows that in this case Conjecture~\ref{conj} holds cohomologically.

\begin{thm}\label{thm0.6}
Let $\pi\colon  M \to B$ be a Lagrangian fibration with $M$ a compact irreducible symplectic variety. Then we have
\begin{equation}\label{P=F2}
H^*(B, \CG_{i,k}) \simeq H^*(B, \CG_{k,i}).
\end{equation}
\end{thm}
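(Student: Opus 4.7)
The plan is to first reinterpret both sides at the level of the cohomology of $M$ and then prove the resulting numerical bi-graded identity. By Saito's $E_1$-degeneration theorem for the Hodge-to-de Rham spectral sequence of a pure Hodge module on a compact base, applied together with the decomposition \eqref{DT}, one obtains
\[
H^a(B,\CG_{i,k}) \simeq H^{a+n-i}\!\left(B,\gr^F_{-k}\DR(\CP_{i-n})\right)\simeq \gr^F_{-k}\gr^P_i H^{a+2n}(M,\BC),
\]
where $\gr^P_\bullet$ denotes the perverse (Leray) filtration on $H^*(M,\BC)$ induced by $\pi$. Theorem~\ref{thm0.6} thus reduces to the bi-graded numerical symmetry
\[
\dim \gr^F_{-k}\gr^P_i H^d(M,\BC) = \dim \gr^F_{-i}\gr^P_k H^d(M,\BC), \qquad \forall\, i,k,d,
\]
from which an abstract graded isomorphism of the hypercohomologies follows at once.

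\textbf{Step 2: Marginals and reduction to LLV.}
Summing the bi-graded identity over $k$ recovers Theorem~\ref{thm1} ($\dim \gr^P_i H^d(M) = h^{i,d-i}(M)$), and summing over $i$ yields the Hodge decomposition $\dim\gr^F_{-k}H^d(M) = h^{k,d-k}(M)$. By Hodge symmetry these marginals are individually symmetric in $i \leftrightarrow k$, but this is strictly weaker than the claimed entrywise symmetry. I would supply the missing input via the Looijenga--Lunts--Verbitsky Lie algebra $\Fg_{\mathrm{LLV}}$ acting on $H^*(M,\BC)$: both the K\"ahler class $\omega$ and the Lagrangian class $\eta=\pi^*h$ determine $\mathfrak{sl}_2$-triples in $\Fg_{\mathrm{LLV}}$ whose semisimple parts act respectively as the Hodge grading and (a shift of the) perverse grading, and Saito's strictness ensures that both act compatibly with the bi-filtration. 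The plan is then to produce an element in the adjoint group of $\Fg_{\mathrm{LLV}}$ that exchanges these two $\mathfrak{sl}_2$-triples, up to a total-degree-preserving normalization, inducing an automorphism of each $H^d(M,\BC)$ that swaps the bi-graded pieces.

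\textbf{Step 3: Main obstacle.}
The main technical obstacle is that $\omega$ is positive and $\eta$ is isotropic for the Beauville--Bogomolov--Fujiki form, so they lie in distinct $\mathrm{SO}(H^2(M,\BR),q_{\mathrm{BBF}})$-orbits and cannot be conjugated by any single element of $\Fg_{\mathrm{LLV}}$. My plan to get around this is to work with the family $\omega_t := \omega + t\eta$ and apply Schmid's $\mathrm{SL}_2$-orbit theorem as $t \to \infty$ to extract the required symmetry from the limit mixed Hodge structure attached to the associated family of $\mathfrak{sl}_2$-triples. Alternatively, one may first restrict to the Verbitsky subring of $H^*(M,\BC)$---where the refined bi-graded identity follows from explicit $\Fg_{\mathrm{LLV}}$-representation theory---and then bootstrap to the full cohomology using the known description of $H^*(M,\BC)$ as an $\Fg_{\mathrm{LLV}}$-module, in the spirit of \cite{HLSY}.
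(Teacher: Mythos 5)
Your Step 1 is essentially the paper's own reduction: via Saito's formula applied to $P_{i-n}^H$ under $B \to \mathrm{pt}$ (together with the strictness/degeneration package), the paper identifies $H^{d-2n}(B,\CG_{i,k})$ with $\gr^F_{-k}$ of the Hodge structure $H^{d-2n}(B,P_{i-n}\otimes_\BQ\BC[n-i])$, i.e.\ with the bigraded piece $\gr^F_{-k}\gr^P_i H^d(M,\BC)$, and it suffices to match dimensions. Your observation that the two marginals (Theorem~\ref{thm1} and Hodge symmetry) are strictly weaker than the entrywise symmetry is also correct, and the LLV algebra is indeed the right tool. So the strategy is aligned with the paper up to this point.

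The gap is in how you extract the symmetry from the LLV algebra. Your premise that ``the Lagrangian class $\eta=\pi^*h$ determines an $\mathfrak{sl}_2$-triple in $\mathfrak{g}_{\mathrm{LLV}}$ whose semisimple part acts as (a shift of) the perverse grading'' is false as stated: $\eta$ is isotropic for $q_M$, hence not of Lefschetz type, and does not generate an $\mathfrak{sl}_2$-triple in the usual sense. You correctly sense this obstacle in Step 3, but the two workarounds you propose (an $\mathrm{SL}_2$-orbit/limit-MHS analysis of $\omega+t\eta$ as $t\to\infty$, or a bootstrap from the Verbitsky subring) are left entirely unexecuted, and neither is needed. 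The resolution, which is the crucial input from \cite[Theorem 3.1]{SY} that your proposal is missing, is that the perverse grading operator is realized \emph{inside} the LLV algebra as $H_P=-\sqrt{-1}\,[L_{\eta+\beta},\Lambda_{-\sqrt{-1}(\eta-\beta)}]$, where $\eta+\beta$ and $-\sqrt{-1}(\eta-\beta)$ are both of Lefschetz type ($\beta$ being the pullback of an ample class on $B$, with $\omega_I=\eta+\beta$). One is never required to conjugate $\omega$ to $\eta$: instead, the four classes $\eta+\beta$, $-\sqrt{-1}(\eta-\beta)$, $\omega_J$, $\omega_K$ generate a copy of $\mathfrak{so}(6)$ (the paper's ``perverse--Hodge algebra'') whose Cartan subalgebra contains both $H_P$ and the Hodge grading operator $H_F=-\sqrt{-1}[L_{\omega_J},\Lambda_{\omega_K}]$ simultaneously, and the Weyl group of $\mathfrak{so}(6)$ contains an element exchanging $H_P^*$ and $H_F^*$ while fixing $H^*$. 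The desired symmetry of weight spaces is then immediate from finite-dimensional representation theory, with no degeneration argument. Without this identification of the perverse grading as an LLV Cartan element, your Steps 2--3 do not constitute a proof.
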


We prove Theorem~\ref{thm0.6} following the ideas of \cite{SY}, which connects the cohomology groups in \eqref{P=F2} to the weight spaces of the Looijenga--Lunts--Verbitsky algebra; \textit{cf.}~\cite{LL,Ver90,Ver95,Ver96}. As a byproduct we deduce that \eqref{P=F2} refines \eqref{P=F}, which justifies that Conjecture~\ref{conj} categorifies Theorem~\ref{thm1}. 

From another aspect, Theorem~\ref{thm0.6} suggests that, among all the symmetries encoded by the Looijenga--Lunts--Verbitsky algebra of $M$, the particular one inducing \eqref{P=F2} can be lifted sheaf-theoretically.

\subsection*{Acknowledgments}
We are grateful to Davesh Maulik for his enthusiasm and for many helpful discussions. We also thank Bohan Fang, Mirko Mauri, Peng Shan, and the anonymous referee for useful comments and suggestions.

\section{Smooth morphisms and variations of Hodge structures}\label{Sec1}

Throughout this section, we assume that $\pi\colon  M \to B$ is smooth, so that the Hodge modules~$P_i^H$ are variations of Hodge structures. 

\subsection{Variations of Hodge structures}
As a consequence of the Arnold--Liouville theorem, a nonsingular fiber of a Lagrangian fibration is a complex torus. In particular, the smooth map $\pi\colon  M \to B$ is a family of abelian varieties. The key to understanding the topology of $\pi$ is the variation of Hodge structures
\[
V = R^1\pi_*\BQ_M;
\]
it is polarized of weight $1$ with associated holomorphic vector bundle $\CV = V \otimes_\BQ \CO_B$. The integrable connection $\nabla\colon  \CV \to \CV\otimes \Omega_B^1$ and the Hodge filtration
\begin{equation}\label{Hodge_fil}
 0 = F^2\CV \subset F^1\CV \subset F^0\CV = \CV
\end{equation}
are compatible via the Griffiths transversality relation
\[
\nabla(F^i\CV) \subset F^{i-1}\CV \otimes \Omega_B^1.
\]
This yields an $\CO_B$-linear map between the graded pieces of \eqref{Hodge_fil}
\begin{equation}\label{nabla_bar}
\overline{\nabla}\colon  \CV^{1,0} \lra \CV^{\,0,1}\otimes \Omega_B^1.
\end{equation}
Here $\CV^{\,i,1-i} = \gr_F^i\CV$ is a vector bundle describing the variation of $H^{i,1-i}({M_b})$ of the fibers $M_b$ with $b \in B$. 

For our purpose, we also consider the variation of Hodge structures $\CV^k = \wedge^k V$ of weight $k$. Its Hodge filtration is 
\[
0 = F^{k+1} \CV^{\mkern1mu k} \subset F^{k-1} \CV^{\mkern1mu k} \subset \cdots \subset F^0\CV^{\mkern1mu k} = \CV^{\mkern1mu k}, 
\]
where the $\supth{i}$ piece is given by
\[
F^i\CV^{\mkern1mu k} = \sum_{{i_l+i_2+\dots +i_k}= i} F_{i_1}\CV \wedge F_{i_2} \CV \wedge \cdots \wedge F_{i_k}\CV.
\]
We denote by $\CV^{\,i,j}$ the graded piece $\gr_i^F \CV^{\,i+j}$.

\begin{lem}\label{lem1.1}
We have a canonical isomorphism of vector bundles
\[
\wedge^i\CV^{1,0} \otimes \wedge^j \CV^{\,0,1} \xrightarrow{\;\simeq\;} \CV^{\,i,j}.
\]
\end{lem}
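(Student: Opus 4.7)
The plan is to construct an explicit natural map from the left-hand side to the right-hand side and verify that it is an isomorphism by reducing to a purely linear-algebraic statement about a two-step filtration.

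First, I would build the map. Given local sections $a_1,\dots,a_i$ of $\CV^{1,0} = F^1\CV$ and $b_1,\dots,b_j$ of $\CV^{0,1} = \CV/F^1\CV$, I choose arbitrary local lifts $\tilde b_\ell \in \CV$ of the $b_\ell$ and send
\[
(a_1\wedge\cdots\wedge a_i) \otimes (b_1\wedge\cdots\wedge b_j) \longmapsto a_1\wedge\cdots\wedge a_i\wedge \tilde b_1 \wedge \cdots \wedge \tilde b_j \bmod F^{i+1}\CV^{i+j}.
\]
By the very definition of $F^\bullet \CV^{i+j}$ recalled in the excerpt, the wedge on the right lies in $F^i\CV^{i+j}$ since it has $i$ explicit factors in $F^1\CV$. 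For well-definedness, replacing a lift $\tilde b_\ell$ by $\tilde b_\ell + \alpha$ with $\alpha \in F^1\CV$ modifies the wedge by an expression containing at least $i+1$ factors in $F^1\CV$, hence lying in $F^{i+1}\CV^{i+j}$ and mapping to zero. The assignment is plainly $\CO_B$-multilinear and alternating in each block of arguments, so it descends to a canonical $\CO_B$-linear map
\[
\Phi\colon \wedge^i\CV^{1,0}\otimes \wedge^j \CV^{0,1} \lra \CV^{i,j}.
\]

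Second, I would check that $\Phi$ is an isomorphism by working locally on $B$. Because $\CV$ is locally free, the short exact sequence $0\to \CV^{1,0}\to \CV\to \CV^{0,1}\to 0$ splits Zariski-locally, producing a (non-canonical) decomposition $\CV\cong \CV^{1,0}\oplus \CV^{0,1}$. From this splitting one obtains
\[
\wedge^{i+j}\CV \cong \bigoplus_{p+q=i+j} \wedge^p \CV^{1,0}\otimes \wedge^q \CV^{0,1},
\]
and unwinding the description of $F^i \CV^{i+j}$ as a sum of wedges with at least $i$ factors in $F^1\CV$ identifies the filtration piece $F^i\CV^{i+j}$ with the subsum over $p\geq i$. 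Passing to the $i$-th graded quotient recovers precisely $\wedge^i\CV^{1,0}\otimes \wedge^j\CV^{0,1}$. Tracing through the construction, this splitting-dependent identification coincides with $\Phi$, so $\Phi$ is an isomorphism locally, hence globally.

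The main obstacle is essentially notational rather than conceptual: the lemma amounts to the well-known fact that the associated graded of $\wedge^{i+j}$ of a two-step filtered bundle splits into the desired tensor product, and the only real point is to verify that the map $\Phi$ is \emph{canonical}, i.e., independent of the choice of local lifts (or splitting). I do not anticipate a substantive difficulty beyond carefully tracking signs in the alternating wedge.
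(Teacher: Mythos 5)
Your proof is correct, but it takes a genuinely different route from the paper's. The paper defines the map as the one induced by the cup product and its compatibility with Hodge filtrations, and then verifies that it is an isomorphism fibre by fibre, invoking the fact that each fibre $M_b$ is an abelian variety so that $H^{i,j}(M_b) = \wedge^i H^{1,0}(M_b)\otimes\wedge^j H^{0,1}(M_b)$. You never use the geometry of the fibres: you construct $\Phi$ by hand via local lifts, check independence of the choice of lift, and then prove it is an isomorphism by splitting the two-step filtration Zariski-locally and identifying $F^i\!\wedge^{i+j}\CV$ with the subsum over $p\geq i$. Since the paper defines $\CV^{\,i,j}$ formally as $\gr^i_F(\wedge^{i+j}\CV)$ with the filtration given by the explicit sum formula, your purely linear-algebraic argument is fully adequate, and it in fact proves the statement for an arbitrary weight-one variation (indeed for any two-step filtered bundle); the paper's fibrewise check is shorter and simultaneously records the geometric content that $\CV^{\,i,j}$ is the Hodge bundle of $R^{i+j}\pi_*\BQ_M$. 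The only point worth spelling out in your write-up is the Koszul-sign bookkeeping when you match the splitting-dependent inclusion of $\wedge^i\CV^{1,0}\otimes\wedge^j\CV^{\,0,1}$ into $\wedge^{i+j}(\CV^{1,0}\oplus\CV^{\,0,1})$ with $\Phi$, which you already flag.
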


\begin{proof}
  The morphism is induced by the cup product and the compatibility with Hodge filtrations. It suffices to check that it is an isomorphism when
  restricting to each $b \in B$; this follows from the fact that $M_b$ is an abelian variety, so that we have
\[
H^{i,j}(M_b) = \wedge^i H^{1,0}(M_b) \otimes \wedge^j H^{0,1}(M_b). \qedhere
\]
\end{proof}

\subsection{Symplectic form}
We discuss the interplay between the symplectic form~$\sigma$ and the variation of Hodge structures $\CV$.

By \cite[Lemma 2.6]{Ma3}, the symplectic form $\sigma$ and a polarization on $M$ induce an isomorphism
\begin{equation}\label{sigma}
    \iota\colon  \CV^{\,0,1} \xrightarrow{\;\simeq\;} \Omega^1_B,
\end{equation}
which further yields
\begin{equation*}
  \wedge^k\iota\colon  \CV^{\,0,k} = \wedge^k \CV^{\,0,1} \xrightarrow{\;\simeq\;} \wedge^k \Omega_B^1  = \Omega_B^k. 
\end{equation*}

Combining \eqref{nabla_bar} and \eqref{sigma}, we obtain a morphism of vector bundles as the composition:
\[
\theta =  (\iota \otimes 1)\circ \overline{\nabla}  \colon  \CV^{1,0} \lra \CV^{\,0,1} \otimes \Omega_B^1 \lra \Omega_B^1 \otimes \Omega_B^1.
\]

\begin{lem}[Donagi--Markman]\label{lem1.2}
The morphism $\theta\colon  \CV^{1,0} \to \Omega_B^1 \otimes \Omega_B^1$ is symmetric with respect to the two factors of the target.
\end{lem}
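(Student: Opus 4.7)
The plan is to reduce the claim to a Kodaira--Spencer-type identity in $\CV^{0,1}$ and then verify that identity using $d\sigma = 0$. The argument combines the Griffiths symmetry of the Higgs field (from the polarization) with the specifically Lagrangian input of closedness of $\sigma$.

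First, I would unpack $\iota$. Via the Lagrangian condition, $\sigma$ induces an iso $\Omega^1_{M/B} \simeq \pi^* T_B$, hence a direct iso $\iota_0\colon \CV^{1,0} = \pi_*\Omega^1_{M/B} \xrightarrow{\sim} T_B$. Combined with the polarization $Q$ on $V$, the map $\iota$ of \eqref{sigma} takes the form $\iota(\beta)(\xi) = Q(\iota_0^{-1}(\xi), \beta)$, and unwinding the definition gives
\[
\theta(\alpha)(\xi_1, \xi_2) = Q(\iota_0^{-1}(\xi_1), \overline{\nabla}_{\xi_2}(\alpha)).
\]
Next, I would invoke the standard Griffiths identity $Q(\alpha_1, \overline{\nabla}_\xi \alpha_2) = Q(\alpha_2, \overline{\nabla}_\xi \alpha_1)$, which follows from $\nabla Q = 0$, the skew-symmetry of $Q$ on the weight-$1$ VHS, and $Q(F^1, F^1) = 0$. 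Applying it to both $\theta(\alpha)(\xi_1, \xi_2)$ and $\theta(\alpha)(\xi_2, \xi_1)$ and using the non-degeneracy of $Q$, the target symmetry reduces to the identity
\[
\overline{\nabla}_{\xi_2}(\iota_0^{-1}(\xi_1)) = \overline{\nabla}_{\xi_1}(\iota_0^{-1}(\xi_2)) \quad \text{in } \CV^{0,1}.
\]

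For this identity, the key point is that $\iota_0^{-1}(\xi)$ lifts to the holomorphic $1$-form $\iota_{\tilde\xi}\sigma \in \Omega^1_M$ for any local lift $\tilde\xi$ of $\xi$, and the Gauss--Manin derivative $\nabla_{\xi_2}$ is then computed as the class of $L_{\tilde\xi_2}(\iota_{\tilde\xi_1}\sigma)$ in the relative de Rham cohomology. Cartan's magic formula combined with $d\sigma = 0$ yields
\[
L_{\tilde\xi_2}(\iota_{\tilde\xi_1}\sigma) - L_{\tilde\xi_1}(\iota_{\tilde\xi_2}\sigma) = d\bigl(\sigma(\tilde\xi_1, \tilde\xi_2)\bigr) + \iota_{[\tilde\xi_2, \tilde\xi_1]}\sigma.
\]
The exact term drops out in cohomology; and for lifts chosen to preserve the Lagrangian distribution $T_{M/B}$ (possible locally since $T_{M/B}$ is integrable), the bracket term lies in $\pi^*\Omega^1_B \subset \Omega^1_M$, hence in $F^1 H^1$, and so projects to zero in $\CV^{0,1} = H^1/F^1$.

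The hard part will be this last cohomological bookkeeping: rigorously identifying the Gauss--Manin derivative with the Lie-derivative formula at the filtered de Rham level, and arranging the holomorphic lifts $\tilde\xi_i$ so that the bracket correction indeed lands in $F^1$. Both steps are standard but depend on making the right local choices compatible with the Lagrangian foliation, and it is here that the hypothesis $d\sigma = 0$ is genuinely used.
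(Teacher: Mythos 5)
Your argument is correct, but it takes a genuinely different route from the paper: the paper's proof is essentially a citation, identifying $\theta$ with a section of $\Omega_B^1\otimes\Omega_B^1\otimes\Omega_B^1$ via the polarization and then invoking the Donagi--Markman cubic condition \cite[Lemma~7.5]{DM} (see also \cite[Theorem~4.4]{Voisin}), which asserts that this section comes from $\Sym^3\Omega_B^1$. What you do instead is reprove the relevant part of that cubic condition from scratch: the reduction via the Griffiths identity $Q(\alpha_1,\overline{\nabla}_\xi\alpha_2)=Q(\alpha_2,\overline{\nabla}_\xi\alpha_1)$ to the Kodaira--Spencer-type symmetry $\overline{\nabla}_{\xi_2}(\iota_0^{-1}\xi_1)=\overline{\nabla}_{\xi_1}(\iota_0^{-1}\xi_2)$ in $\CV^{\,0,1}$ is exactly the mechanism behind Donagi--Markman, and your Cartan-formula computation with $d\sigma=0$ is where the Lagrangian/symplectic input enters. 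The trade-off is the usual one: the paper's proof is two lines but opaque about where closedness of $\sigma$ is used, while yours is self-contained and makes that dependence explicit. (Note that your cubic-condition statement is in fact stronger than the lemma --- full $\Sym^3$ symmetry versus symmetry in the last two slots --- and your two ingredients combined would recover it.)

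One remark on the step you flag as the hard part: it is easier than you fear. You do not need lifts adapted to the Lagrangian foliation. For \emph{any} local lifts $\tilde\xi_1,\tilde\xi_2$ of commuting or non-commuting base fields, the bracket $[\tilde\xi_2,\tilde\xi_1]$ is automatically a lift of $[\xi_2,\xi_1]$, and the contraction of $\sigma$ against any vector field on $M$ restricts on each fiber to a global holomorphic $1$-form, i.e.\ to an element of $F^1H^1(M_b)$. So the term $\iota_{[\tilde\xi_2,\tilde\xi_1]}\sigma$ lands in $F^1\CV$ for free (it is just $\iota_0^{-1}([\xi_2,\xi_1])$), and dies in $\CV^{\,0,1}=\CV/F^1\CV$; the exact term dies already in $H^1$. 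The only genuinely standard input you need is the Katz--Oda description of the Gauss--Manin connection by Lie derivatives, which applies since $\iota_{\tilde\xi_1}\sigma$ restricts to a closed form on each fiber. Also note that $\overline{\nabla}$ is $\CO_B$-linear on $\CV^{1,0}=F^1\CV$, so the identity you reduce to is pointwise and independent of the local extensions of $\xi_1,\xi_2$, which removes the remaining bookkeeping concern.
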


\begin{proof}
Notice that $\CV^{1,0}$ is dual to $\CV^{\,0,1} \simeq \Omega_B^1$ via the polarization. Hence $\theta$ can be viewed as a section of $\Omega_B^1 \otimes \Omega_B^1 \otimes \Omega_B^1$. The proposition follows from the cubic condition for Lagrangian fibration \cite[Lemma~7.5]{DM} which says that the section corresponding to $\theta$ is induced by a section of $\Sym^3\Omega_B^1$; see also \cite[Theorem 4.4]{Voisin}.
\end{proof}

More generally, for any $k \geq 1$ we consider the morphism
\[
\overline{\nabla}\colon  \CV^{\mkern1mu k,0} \lra \CV^{\mkern1mu k-1,1}\otimes \Omega_B^1 = \CV^{\mkern1mu k-1,0}\otimes \CV^{\,0,1} \otimes \Omega_B^1,
\]
where the first map is induced by the Gauss--Manin connection $\nabla\colon  \CV \to \CV\otimes \Omega_B^1$ and the second identity is given by Lemma~\ref{lem1.1}.

\begin{cor}\label{cor1.3}
The composition 
\[
(1\otimes \iota \otimes 1)\circ \overline{\nabla}\colon   \CV^{\mkern1mu k,0} \lra  \CV^{\mkern1mu k-1,0}\otimes \CV^{\, 0,1} \otimes \Omega_B^1 \lra \CV^{\mkern1mu k-1,0}\otimes \Omega_B^1 \otimes \Omega_B^1 
\]
is symmetric with respect to the second and  third factors of the target.
\end{cor}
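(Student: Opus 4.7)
The plan is to reduce the statement to Lemma~\ref{lem1.2} by exploiting the fact that the Gauss--Manin connection is a derivation with respect to the exterior algebra structure on $\CV^{\bullet} = \wedge^\bullet \CV$.

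First I would unwind the definitions using Lemma~\ref{lem1.1}, which identifies $\CV^{\mkern1mu k,0} = \wedge^k \CV^{1,0}$ and $\CV^{\mkern1mu k-1,1} = \CV^{\mkern1mu k-1,0}\otimes \CV^{\,0,1}$. Since $\nabla$ on $\CV^{\mkern1mu k}$ is induced from the connection on $\CV$ as a derivation, the associated graded map $\overline{\nabla}\colon \CV^{\mkern1mu k,0} \to \CV^{\mkern1mu k-1,0}\otimes\CV^{\,0,1}\otimes\Omega_B^1$ satisfies a Leibniz-type formula: for local sections $v_1,\dots,v_k$ of $\CV^{1,0}$,
\[
\overline{\nabla}(v_1 \wedge \cdots \wedge v_k) = \sum_{j=1}^k (-1)^{j-1} (v_1 \wedge \cdots \wedge \widehat{v}_j \wedge \cdots \wedge v_k) \otimes \overline{\nabla}(v_j),
\]
where $\overline{\nabla}(v_j)\in \CV^{\,0,1}\otimes \Omega_B^1$ is the map of \eqref{nabla_bar}. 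The only point to verify here is the compatibility of the derivation property with the identification of Lemma~\ref{lem1.1}; this is a direct check since the isomorphism there is defined via the cup product, which is compatible with the Hodge filtrations.

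Next I would apply $1 \otimes \iota \otimes 1$ termwise to the formula above. Each summand becomes
\[
(-1)^{j-1}(v_1 \wedge \cdots \wedge \widehat{v}_j \wedge \cdots \wedge v_k) \otimes \theta(v_j),
\]
where $\theta = (\iota \otimes 1)\circ \overline{\nabla}\colon \CV^{1,0} \to \Omega_B^1 \otimes \Omega_B^1$. By Lemma~\ref{lem1.2}, each element $\theta(v_j)$ is symmetric in the two $\Omega_B^1$ factors. Summing over $j$, the whole expression remains symmetric in the second and third factors of $\CV^{\mkern1mu k-1,0}\otimes\Omega_B^1\otimes\Omega_B^1$, as desired.

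The only mild obstacle is making sure the Leibniz formula for $\overline{\nabla}$ on $\wedge^k \CV$ agrees, under the canonical splitting of Lemma~\ref{lem1.1}, with the stated factor in $\CV^{\mkern1mu k-1,0}\otimes \CV^{\,0,1}\otimes \Omega_B^1$; once this bookkeeping is in place, the corollary follows immediately from the pointwise symmetry supplied by Lemma~\ref{lem1.2}.
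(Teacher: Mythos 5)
Your proof is correct and takes essentially the same route as the paper: the paper argues by induction on $k$, writing $s_k = s_{k-1}\wedge t$ and using the Leibniz rule to reduce to the symmetry of $\theta$ (Lemma~\ref{lem1.2}, the Donagi--Markman cubic condition), which is exactly your full Leibniz expansion unrolled one factor at a time. The only difference is cosmetic (direct expansion versus induction), and your explicit identification of Lemma~\ref{lem1.2} as the base ingredient matches the paper's intent.
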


\begin{proof}
We proceed by induction on $k$. The induction base is Lemma~\ref{lem1.1}. Now assume that the statement holds for $k-1$. We have
\[
\CV^{\mkern1mu k,0} = \wedge^{k-1} \CV^{1,0} \wedge \CV^{1,0} = \CV^{\mkern1mu k-1,0} \wedge \CV^{1,0}.
\]
We consider a local section $s_{k}$ of $\CV^{\mkern1mu k,0}$ which can be written as $s_{k-1} \wedge t$ with $s_{k-1}$ and $t$ local sections of $\CV^{\mkern1mu k-1,0}$ and $\CV^{1,0}$, respectively. The image $\overline{\nabla}(s_{k})$ consists of two terms $\overline{\nabla}(s_{k-1})\wedge s_1$ and $s_{k-1} \wedge \overline{\nabla}(s_1)$. We obtain from the induction hypothesis and the induction base that both of them are local sections of $\CV^{\mkern1mu k-1,0}\otimes \Sym^2\Omega_B^1$. This completes the induction. 
\end{proof}

\subsection{Proof of Theorem~\ref{thm0.3}} 
The main ingredients of the proof are
\begin{enumerate}[(i)]
    \item\label{pf-1} the isomorphism \eqref{sigma} induced by the symplectic form $\sigma$ and a polarization, and
    \item\label{pf-2} the symmetry of Corollary~\ref{cor1.3}, which follows from the Donagi--Markman cubic condition.
\end{enumerate}

We first note that by Lemma~\ref{lem1.1} we have a canonical isomorphism
\begin{equation}\label{1.3_1}
\CV^{\,i,j} \otimes \Omega_B^k = \CV^{\,i,0} \otimes \wedge^j \CV^{\,0,1} \otimes \wedge^k \Omega_B^1.
\end{equation}
Hence \eqref{sigma} induces an isomorphism of vector bundles
\begin{equation}\label{ijk}
    \iota_{i,j,k} \colon  \CV^{\,i,j} \otimes \Omega_B^k \xrightarrow{\;\simeq\;} \CV^{\,i,k} \otimes \Omega_B^j
\end{equation}
by switching the second and third factors of the right-hand side of \eqref{1.3_1}.

Secondly, for any $i,j,k$, the Gauss--Manin connection of $\CV$ induces an $\CO_B$-linear morphism
\[
\overline{\nabla}\colon   \CV^{\,i,j} \otimes \Omega_B^k \lra \CV^{\,i-1,j+1}\otimes \Omega_B^{k+1}. 
\]

The following proposition shows the compatibility between the isomorphisms $\iota_{i,j,k}$ and the morphisms $\overline{\nabla}$; it relies heavily on  ingredient~\ref{pf-2}.

\begin{prop}\label{prop1.4}
We have a commutative diagram
\[
\begin{tikzcd}
 \CV^{\,i,j} \otimes \Omega_B^k \arrow[r,"\overline{\nabla}"] \arrow[d, "\iota_{i,j,k}"]
& \CV^{\,i-1,j+1}\otimes \Omega_B^{k+1} \arrow[d, "\iota_{i-1,j+1,k+1}"] \\
\CV^{\,i,k}\otimes \Omega_B^j \arrow[r, "\overline{\nabla}"]
& \CV^{\,i-1,k+1}\otimes \Omega_B^{j+1}\rlap{.}
\end{tikzcd}
\]
\end{prop}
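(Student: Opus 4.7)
The plan is to check the commutativity on decomposable sections and to reduce the identity to the Donagi--Markman symmetry of Corollary~\ref{cor1.3}.

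I would start by picking a local section of $\CV^{\,i,j}\otimes\Omega_B^k$ of the form $\alpha\otimes\beta\otimes\omega$ with $\alpha\in\wedge^i\CV^{1,0}$, $\beta\in\wedge^j\CV^{\,0,1}$, and $\omega\in\wedge^k\Omega_B^1$, using Lemma~\ref{lem1.1} and \eqref{1.3_1}. Since every map in the square is $\CO_B$-linear, this is enough. The key preliminary observation is that, since $V$ has weight one, $\gr_F^{-1}\CV=0$, so the induced Gauss--Manin connection annihilates the antiholomorphic part. Concretely, under Lemma~\ref{lem1.1},
\[
\overline{\nabla}\colon\CV^{\,i,j}\to\CV^{\,i-1,j+1}\otimes\Omega_B^1
\]
is induced by the map $\overline{\nabla}_1\colon\CV^{1,0}\to\CV^{\,0,1}\otimes\Omega_B^1$ of \eqref{nabla_bar} acting on the $\wedge^i\CV^{1,0}$ factor alone, with the $\wedge^j\CV^{\,0,1}$ factor simply wedged with the new $\CV^{\,0,1}$-output.

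Writing $\overline{\nabla}(\alpha)=\sum_l \alpha''_l\otimes\gamma_l\otimes\eta_l$ in $\wedge^{i-1}\CV^{1,0}\otimes\CV^{\,0,1}\otimes\Omega_B^1$, a direct unwinding of the definitions of $\overline{\nabla}\otimes\id$, of the wedge $\Omega_B^1\otimes\Omega_B^k\to\Omega_B^{k+1}$, and of $\iota_{\bullet,\bullet,\bullet}$ yields
\begin{align*}
\iota_{i-1,j+1,k+1}\circ\overline{\nabla}(\alpha\otimes\beta\otimes\omega)
&=\sum_l\alpha''_l\otimes\bigl(\iota^{-1}(\eta_l)\wedge\iota^{-k}(\omega)\bigr)\otimes\bigl(\iota(\gamma_l)\wedge\iota^j(\beta)\bigr),\\
\overline{\nabla}\circ\iota_{i,j,k}(\alpha\otimes\beta\otimes\omega)
&=\sum_l\alpha''_l\otimes\bigl(\gamma_l\wedge\iota^{-k}(\omega)\bigr)\otimes\bigl(\eta_l\wedge\iota^j(\beta)\bigr),
\end{align*}
where $\iota^m$ denotes the $m$-th exterior power of $\iota\colon\CV^{\,0,1}\xrightarrow{\sim}\Omega_B^1$ (or of its inverse). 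The two outputs differ exactly by the simultaneous substitution $\gamma_l\leftrightarrow\iota^{-1}(\eta_l)$, $\eta_l\leftrightarrow\iota(\gamma_l)$ performed inside $\sum_l\alpha''_l\otimes\gamma_l\otimes\eta_l$. After applying $\iota$ in the middle slot, their equality is precisely the assertion that
\[
(\id\otimes\iota\otimes\id)\,\overline{\nabla}(\alpha)\in\wedge^{i-1}\CV^{1,0}\otimes\Omega_B^1\otimes\Omega_B^1
\]
is symmetric in its two $\Omega_B^1$-factors, which is Corollary~\ref{cor1.3}. This would finish the proof.

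The main obstacle is bookkeeping rather than content: one has to organise the identifications supplied by Lemma~\ref{lem1.1}, \eqref{1.3_1}, and the extensions of $\iota$ to higher wedge powers, while tracking the Koszul signs that appear when the $\Omega_B^1$-output of $\overline{\nabla}$ is wedged with $\omega$ or when $\iota^{j+1}$ and $\iota^{-(k+1)}$ are split across a wedge product. Once a uniform sign convention is fixed on both paths of the square, the commutativity reduces cleanly to the symmetry supplied by the Donagi--Markman cubic condition.
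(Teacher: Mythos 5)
Your proposal is correct and follows essentially the same route as the paper: reduce to decomposable local sections, use Griffiths transversality (weight one) to see that $\overline{\nabla}$ acts only on the $\wedge^i\CV^{1,0}$ factor while the remaining factors are merely wedged along, and then identify the discrepancy between the two paths of the square with the symmetry of $(\id\otimes\iota\otimes\id)\circ\overline{\nabla}$ supplied by Corollary~\ref{cor1.3}. The paper's proof is the same argument with the identifications of Lemma~\ref{lem1.1} and \eqref{sigma} suppressed from the notation.
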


\begin{proof}
To simplify the notation, we write the morphism of the top horizontal arrow as
\begin{equation}\label{4.1_0}
\overline{\nabla}\colon  \CV^{\,i,0} \otimes \Omega_B^j \otimes \Omega_B^k \lra \CV^{\,i-1,0} \otimes \Omega_B^{j+1} \otimes \Omega_B^{k+1}, 
\end{equation}
where we suppress the isomorphisms induced by \eqref{sigma} and Lemma~\ref{lem1.1}. In particular, for a local section
\begin{equation}\label{1.4_1}
s \otimes t \otimes u \in \Gamma\left( \CV^{\,i,0} \otimes \Omega_B^j \otimes \Omega_B^k \right),
\end{equation}
the image $\overline{\nabla}(s\otimes t \otimes u)$ is a local section of $\CV^{\,i-1,0} \otimes \Omega_B^{j+1} \otimes \Omega_B^{k+1}$. Similarly, for the same element \eqref{1.4_1} we have
\[
\overline{\nabla}(s \otimes u \otimes t) \in \Gamma\left( \CV^{\,i-1,0} \otimes \Omega_B^{k+1} \otimes \Omega_B^{j+1} \right).
\]
To prove the commutativity of the diagram, it suffices to show that $\overline{\nabla}(s \otimes t \otimes u)$ and $\overline{\nabla}(s \otimes u \otimes t)$ coincide under the natural isomorphism switching the second and the third factors
\[
\CV^{\,i-1,0} \otimes \Omega_B^{j+1} \otimes \Omega_B^{k+1} \simeq \CV^{\,i-1,0} \otimes \Omega_B^{k+1} \otimes \Omega_B^{j+1}. 
\]

By Griffiths transversality, the morphism $\overline{\nabla}$ of \eqref{4.1_0} is linear with respect to the second factor on the left-hand side as it represents $\CV^{0,j}$. Therefore, we have
\[
\overline{\nabla}(s \otimes t \otimes u) = \overline{\nabla}(s) \wedge t \wedge u.
\]
Here 
\begin{equation}\label{1.4_2}
\overline{\nabla}(s) \in \Gamma\left(\CV^{\,i-1,0} \otimes \Omega_B^1 \otimes \Omega_B^1\right)
\end{equation}
and the wedge product with $t$ and $u$ are on the second and  third factors, respectively. Hence the desired property is a consequence of Corollary~\ref{cor1.3}, which states that \eqref{1.4_2} is symmetric with respect to the second and  third factors.
\end{proof}

Lastly, we show that Theorem~\ref{thm0.3} follows from the commutative diagram of Proposition~\ref{prop1.4}. Since $\pi\colon  M\to B$ is smooth, the $\mathcal{D}_B$-module $\mathcal{P}_{i-n}$ in the Hodge module $P_{i-n}^H$ is the variation of Hodge structures~$\CV^{\,i}$, and the filtration $F_\bullet \CP_{i-n}$ is described as
\[
F_k \CP_{i-n} = F^{-k} \CV^{\,i}.
\]
In particular, the de Rham complex of $\mathcal{P}_{i-n}$ is
\[
\DR(\CP_{i-n}) = \left[\CV^{\,i} \xrightarrow{{\;\nabla\;}} \CV^{\,i} \otimes\Omega^1_B \xrightarrow{{\;\nabla\;}}  \cdots \xrightarrow{{\;\nabla\;}} \CV^{\,i} \otimes \Omega^n_B\right]\![n],
\]
and the associated perverse--Hodge complexes are
\[
\CG_{i,k} = \left[\CV^{\mkern1mu k,i-k} \xrightarrow{\;\overline{\nabla}\;} \CV^{\mkern1mu k-1,i-k+1} \otimes\Omega^1_B \xrightarrow{\;\overline{\nabla}\;}  \cdots \xrightarrow{\;\overline{\nabla}\;} \CV^{\mkern1mu k-n,i-k+n} \otimes \Omega^n_B\right]\![2n-i].
\]

We prove Theorem~\ref{thm0.3} by showing that the two complexes $\CG_{i,k}$ and $\CG_{k,i}$ match term by term via the isomorphisms \eqref{ijk}. For convenience we may assume  $i \leq k$. As $\CV^{\,i,j} = 0$ for $j<0$, we find
\begin{equation}\label{1.3_2}
\CG_{i,k} = \left[\CV^{\,i,0}\otimes \Omega_B^{k-i} \xrightarrow{\;\overline{\nabla}\;} \CV^{\,i-1,1}\otimes \Omega_B^{k-i+1}\xrightarrow{\;\overline{\nabla}\;} \cdots \xrightarrow{\;\overline{\nabla}\;} \CV^{\mkern1mu k-n,i-k+n} \otimes \Omega^n_B\right]\![2n-k].
\end{equation}
On the other hand, we have $\CV^{\,i,j} = 0$ for $j>n$ by the fact that $\pi$ is a family of abelian varieties of dimension~$n$. Consequently, the complex $\CG_{k,i}$ is of the form
\begin{equation}\label{1.3_3}
    \CG_{k,i} = \left[\CV^{\,i,k-i} \xrightarrow{\;\overline{\nabla}\;} \CV^{\,i-1,k-i+1}\otimes \Omega_B^{1}\xrightarrow{\;\overline{\nabla}\;} \cdots\xrightarrow{\;\overline{\nabla}\;} \CV^{\mkern1mu k-n,n} \otimes \Omega^{i-k+n}_B\right]\![2n-k].
\end{equation}
We see that the complexes \eqref{1.3_2} and \eqref{1.3_3} are of the same length and are both concentrated in degrees $[k-2n, i-n]$. Then Proposition~\ref{prop1.4} yields an isomorphism
\[
(\iota_{i,0,k-i}, \iota_{i-1,1,k-i+1}, \dots, \iota_{k-n,i-k+n,n})\colon  \CG_{i,k} \xrightarrow{\;\simeq\;} \CG_{k,i}, 
\]
where the commutative diagram guarantees that it is indeed an isomorphism of complexes. This completes the proof. \qed

\section{Hodge modules}\label{Sec2}

In order to extend the isomorphisms established in Section~\ref{Sec1} to the singular fibers, we need to use Saito's theory of Hodge modules \cite{S1, S2}. We begin with some relevant properties of Hodge modules. Then we recall Matsushita's result~\cite{Ma3} on the higher direct images of~$\mathcal{O}_M$; we show in Proposition~\ref{prop2.3} that Matsushita's result is equivalent to the case $k=2n$ of Conjecture~\ref{conj}.

\subsection{Hodge modules}
Recall that a variation of Hodge structures of weight $w$ on a nonsingular variety $X$ is a triple 
\begin{equation}\label{VHS0}
V^H = (\CV, F_\bullet, V), \quad \nabla(F_k \CV) \subset F_{k+1}\CV \otimes \Omega_X^1, 
\end{equation}
where $V$ is a ($\BQ$-)local system, $F_\bullet$ is an increasing filtration, and $\CV = V \otimes_\BQ \CO_X$, such that the restriction $(\CV_x, F_\bullet, V_x)$ to each $x\in X$ is a pure Hodge structure of weight $w$.\footnote{Traditionally the Hodge filtration is a decreasing filtration; the relation with the increasing filtration here is $F_{-k} = F^k$.} We say that~$V^H$ is polarizable if it admits a morphism 
\[
Q\colon  V \times V \lra \BQ(w) = (2\pi i)^{-w}\BQ
\]
inducing a polarization on each stalk $V_x$. 

Pure Hodge modules introduced by Saito \cite{S1} are vast generalizations of variations of Hodge structures. As in \eqref{VHS0}, a pure Hodge module on $X$ is a triple 
\begin{equation}\label{HM}
P^H = (\CP, F_\bullet, P), \quad \nabla(F_k\CP) \subset F_{k+1}\CP\otimes \Omega_X^1, 
\end{equation}
where $\CP$ is a regular holonomic $\CD_X$-module, $F_\bullet$ is a good filtration on $\CP$, and $P$ is a perverse sheaf on $X$, such that $\CP$ corresponds to $P\otimes_\BQ \BC$ via the Riemann--Hilbert correspondence.\footnote{For convenience, we sometimes only use the pair $(\CP, F_\bullet)$ to denote a pure Hodge module.} Such triples satisfy a number of technical conditions; in particular, one can define the notions of weight and polarization.

The following theorem by Saito \cite{S2} provides a concrete description of polarizable pure Hodge modules on $X$ as extensions of polarizable variations of Hodge structures.

\begin{thm}[Saito]\label{thm:Saito}\leavevmode
\begin{enumerate}
    \item\label{thm:Saito-1} The category $\HM(X,w)$ of polarizable pure Hodge modules of weight $w$ on $X$ is abelian and semisimple. 
    \item\label{thm:Saito-2} For any closed subvariety $Z \subset X$, a simple polarizable variation of Hodge structures of weight $w- \dim Z$ on a nonsingular open subset of $Z$ can be uniquely extended to a simple object in $\HM(X,w)$.
    \item All simple objects in $\HM(X,w)$ arise this way.
\end{enumerate}
\end{thm}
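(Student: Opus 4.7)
The plan is to follow Saito's inductive construction from \cite{S1, S2}. The category $\HM(X,w)$ is defined by induction on $\dim\operatorname{supp}$, with the inductive step governed by the nearby and vanishing cycle functors $\psi_f,\varphi_f$ along locally defined smooth hypersurfaces $\{f=0\}$. The axioms force $\psi_f$ and $\varphi_f$ to carry a pure Hodge module of weight $w$ to pure Hodge modules of appropriately shifted weight supported on $\{f=0\}$, and require $F_\bullet\CP$ to be strict with respect to the V-filtration (Kashiwara--Malgrange filtration) along every smooth hypersurface. Part (1) and the two halves of (2)--(3) will each be handled by a separate step building on this foundation.

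First I would establish the \emph{strict support decomposition}: any $P^H\in\HM(X,w)$ decomposes canonically as $P^H = \bigoplus_Z P_Z^H$, indexed by irreducible closed subvarieties $Z\subset X$, where each $P_Z^H$ admits no sub- or quotient-object supported in a proper subvariety of $Z$. Locally, this is produced by iterating the canonical splittings coming from $\mathrm{can}\colon\psi_{f,1}\to\varphi_{f,1}$ and $\mathrm{var}\colon\varphi_{f,1}\to\psi_{f,1}(-1)$ for a smooth hypersurface cutting a generic point of the support; purity of the monodromy filtration on $\psi_f$, built into the axioms, ensures that the splittings are compatible with the Hodge structure.

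Next I would analyze Hodge modules with fixed strict support $Z$. On a smooth open subset $U\subset Z$ where $\CP_Z|_U$ is a vector bundle with integrable connection, the pair $(\CP_Z|_U,F_\bullet)$ restricts to a polarizable VHS of weight $w-\dim Z$, which gives part of (3) and the ``restriction'' direction of (2). For the existence and uniqueness of the extension in (2), I would combine two ingredients: (a) Kashiwara's theorem characterizing the minimal extension of a regular holonomic $\CD$-module as the unique extension having neither sub- nor quotient-object supported outside $U$, and (b) the Schmid--Cattani--Kaplan nilpotent orbit and $SL_2$-orbit theorems, which determine the asymptotic behavior of the Hodge filtration near the boundary of a degenerating period map. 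After reducing to an embedded resolution whose boundary is normal crossings, these asymptotics pin down $F_\bullet$ along each boundary stratum uniquely via the V-filtrations; uniqueness then follows from strictness, and existence is obtained by constructing these filtrations component by component and gluing.

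Finally, for part (1): abelianness follows because morphisms in $\HM(X,w)$ are automatically strict with respect to $F_\bullet$ and all V-filtrations, so kernels and cokernels inherit every axiom. Semisimplicity reduces via the strict support decomposition to the case of a fixed strict support $Z$, and then via the equivalence of (2) to Deligne's classical semisimplicity theorem for polarizable variations of Hodge structures on the smooth open subset $U\subset Z$. The main obstacle throughout is the extension construction in (2): controlling the Hodge filtration near the boundary requires the full strength of Cattani--Kaplan--Schmid asymptotic Hodge theory, and verifying that the extended filtered $\CD$-module satisfies Saito's axioms rests on the delicate compatibility between the V-filtration and the Hodge filtration that pervades \cite{S1, S2}.
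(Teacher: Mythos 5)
This statement is not proved in the paper at all: it is quoted verbatim as Saito's structure theorem for polarizable pure Hodge modules, with a citation to \cite{S1, S2}, and the authors use it as a black box. So there is no in-paper argument to compare yours against; the only meaningful question is whether your outline is an accurate and sufficient reconstruction of Saito's proof. As a roadmap it is essentially faithful to Saito's architecture --- inductive definition of $\HM(X,w)$ via nearby/vanishing cycles and the V-filtration, strict support decomposition via $\mathrm{can}$ and $\mathrm{var}$, reduction of the fixed-support case to polarizable variations of Hodge structures on a dense smooth open subset, Cattani--Kaplan--Schmid asymptotics to control $F_\bullet$ at the boundary, strictness of morphisms for abelianness, and Deligne-type semisimplicity of polarizable VHS for semisimplicity.

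That said, as a \emph{proof} the proposal has gaps of two kinds. First, a technical imprecision: the axioms do not force $\psi_f$ and $\varphi_{f,1}$ to send a pure Hodge module of weight $w$ to \emph{pure} objects; they produce filtered objects equipped with the (shifted) relative monodromy weight filtration whose \emph{graded pieces} are required to be pure Hodge modules of lower-dimensional support. This distinction is not cosmetic --- the entire inductive structure of Saito's definition, and the proof that the strict support decomposition is compatible with the Hodge data, runs through the weight filtration on nearby cycles, so stating the axiom as "pure goes to pure" would break the induction. Second, every step you defer ("existence is obtained by constructing these filtrations component by component and gluing", "verifying that the extended filtered $\CD$-module satisfies Saito's axioms") is precisely where the content of \cite{S1, S2} lies: the strictness of the direct image complex, the compatibility of $F_\bullet$ with the V-filtration after resolution to normal crossings, and the descent of the construction from the normal crossings model back to $Z$ occupy the bulk of both papers and cannot be recovered from the outline. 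For the purposes of this paper the correct move is simply to cite Saito, as the authors do; if you intend your sketch as an actual proof, each of the deferred verifications needs to be supplied.
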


From now on, all Hodge modules are assumed to be pure and polarizable. For any simple Hodge module~\eqref{HM}, we define its support to be the support of the simple perverse sheaf $P$.

Set $\dim X = d$. The de Rham complex of a Hodge module $P^H = (\CP, F_\bullet)$ is 
\[
\DR(\CP) = \left[\CP \xrightarrow{\;\nabla\;} \CP\otimes\Omega^1_X \xrightarrow{\;\nabla\;}  \cdots \xrightarrow{\;\nabla\;} \CP \otimes \Omega^d_X\right]\![d]
\]
and is 
concentrated in degrees $[-d,0]$. The filtration $F_\bullet \CP$ induces an increasing filtration 
\[
F_k\DR(\CP) = \left[F_k\CP \xrightarrow{\;\nabla\;} F_{k+1}\CP\otimes\Omega^1_X \xrightarrow{\;\nabla\;}  \cdots \xrightarrow{\;\nabla\;} F_{k+d}\CP \otimes \Omega^d_X\right]\![d]
\]
whose $\supth{k}$ graded piece is the complex of $\CO_X$-modules
\[
\gr^F_k\DR(\CP) = \left[\gr^F_k\CP \xrightarrow{\;\overline{\nabla}\;} \gr^F_{k+1}\CP\otimes\Omega^1_X \xrightarrow{\;\overline{\nabla}\;}  \cdots \xrightarrow{\;\overline{\nabla}\;} \gr^F_{k+d}\CP \otimes \Omega^d_X\right]\![d].
\]
Note that $\gr^F_k\CP = 0$ for $k>0$ when $\CP$ is given by a variation of Hodge structures, but this is not true for general Hodge modules. The functor $\gr_k\DR(-)$ extends naturally to the bounded derived category of Hodge modules taking values in $D^b\Coh(X)$.

\subsection{Decomposition theorem, Saito's formula, and duality}
Let $f\colon  X\to Y$ be a~projective morphism between nonsingular varieties. For a Hodge module \mbox{$P^H = (\mathcal{P}, F_\bullet) \in \HM(X, w)$}, Saito's decomposition theorem \cite{S1} states that there is a decomposition in the bounded derived category of Hodge modules on $Y$,
\begin{equation} \label{saito_dec}
f_+P^H \simeq \bigoplus_i \mathcal{H}^i\left(f_+P^H\right)[-i],
\end{equation}
with $\mathcal{H}^i(f_+P^H) \in \HM(Y, w + i)$. Its compatibility with the functor $\gr^F_k\DR(-)$ is given by the following formula (often known as Saito's formula; see \cite[Section~2.3.7]{S1}):
\begin{equation} \label{saito_for}
Rf_*\gr^F_k\DR(\mathcal{P}) \simeq \gr^F_k\DR(f_+\CP) \simeq \bigoplus_i \gr^F_k\DR(\mathcal{H}^i(f_+\CP))[-i].
\end{equation}

The functor $\gr^F_k\DR(-)$ is also compatible with Serre duality. Recall that $\dim X = d$. For a Hodge module~$P^H = (\CP, F_\bullet) \in \HM(X, w)$, we have
\begin{equation}\label{duality}
R\mathcal{H}\mathrm{om}_{\CO_X}\left(\gr^F_k\DR(\CP), \omega_X[d]\right) \simeq \gr^F_{-k-w}\DR(\CP), 
\end{equation}
where $\omega_X$ is the dualizing sheaf of $X$; see \cite[Lemma 7.4]{Sch}.

Now we consider a Lagrangian fibration $\pi\colon  M \to B$ with $\dim M = 2n$. For our purpose, we study the direct image $\pi_+\BQ_M^H[2n]$ of the trivial Hodge module
\[
\BQ_M^H[2n] = (\CO_M, F_\bullet), \quad F_{-1}\CO_M =0 \subset F_0\CO_M  = \CO_M.
\]
A direct calculation yields
\begin{equation}\label{2.1_1}
\gr^F_{-k}\DR(\CO_M) \simeq \Omega_M^{k}[2n-k].
\end{equation}
Here conventionally $\Omega_M^k = 0$ for $k<0$. Formulas \eqref{saito_dec} and \eqref{saito_for} then read
\begin{equation*}
\pi_+\BQ_M^H[2n] \simeq \bigoplus_{i = -n}^n P^H_i[-i], \quad P^H_i = (\mathcal{P}_i, F_\bullet) \in \HM(B, 2n + i)
\end{equation*}
and
\begin{equation}\label{saito_formula}
R\pi_* \Omega_M^k[2n - k] \simeq \gr^F_{-k}\DR(f_+ \CO_M) \simeq \bigoplus_{i = -n}^n \gr^F_{-k}\DR(\CP_i)[-i].
\end{equation}
Finally, since $\dim B = n$ and all the fibers of $\pi\colon  M \to B$ have dimension $n$, we have $\gr_k^F\DR(\CP_i) = 0$ for~$k < \max\{-2n, -2n - i\}$. Applying the duality \eqref{duality}, we also have $\gr_k^F\DR(\CP_i) = 0$ for $k > \min\{0, -i\}$.

\subsection{Matsushita's theorem revisited}

Let $\pi\colon  M \to B$ be a Lagrangian fibration with $\dim M = 2n$. Matsushita \cite{Ma3} calculated the higher direct images of $\CO_M$.

\begin{thm}[Matsushita] For $0\leq i \leq n$, we have
\begin{equation}\label{Matsushita}
R^i\pi_* \CO_M \simeq \Omega_B^i.
\end{equation}
\end{thm}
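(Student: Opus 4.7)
My plan is to prove Matsushita's theorem by first reducing $R\pi_*\CO_M$ to a single graded piece of a Hodge module via Saito's formula, then identifying that piece on the smooth locus using the material of Section~\ref{Sec1}, and finally descending to all of $B$. Applying Saito's formula~\eqref{saito_formula} with $k=2n$, and using $\omega_M=\Omega_M^{2n}\simeq\CO_M$ coming from the symplectic structure, the left-hand side becomes $R\pi_*\CO_M$, so
\[
R\pi_*\CO_M \;\simeq\; \bigoplus_{i=-n}^{n}\gr_{-2n}^F\DR(\CP_i)[-i].
\]
The lower vanishing bound stated just before the theorem, applied at $k=-2n$, kills every summand with $i<0$; the same bound forces $\gr_k^F\CP_i=0$ for $k<-n$, from which a direct degree check on the filtered de Rham complex shows that for each $0\le i\le n$ the complex $\gr_{-2n}^F\DR(\CP_i)$ is concentrated in cohomological degree~$0$ with value $\gr_{-n}^F\CP_i\otimes\omega_B$.

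On the smooth locus $B^\circ$ of $\pi$, $\CP_i$ restricts to the variation of Hodge structures $\CV^{\,n+i}$ of weight $n+i$ and $\gr_{-n}^F\CP_i|_{B^\circ}=\CV^{\,n,i}$. Lemma~\ref{lem1.1} factors $\CV^{\,n,i}\simeq\wedge^n\CV^{1,0}\otimes\wedge^i\CV^{\,0,1}$, and the isomorphism $\iota$ of~\eqref{sigma} (induced by the symplectic form and a polarization), together with its dual, identifies $\wedge^i\CV^{\,0,1}\simeq\Omega_B^i$ and $\wedge^n\CV^{1,0}\simeq\omega_B^{-1}$; tensoring with $\omega_B$ yields $\gr_{-2n}^F\DR(\CP_i)|_{B^\circ}\simeq\Omega_{B^\circ}^i$.

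To globalize, I construct a canonical morphism $\Omega_B^i\to R^i\pi_*\CO_M$ on all of $B$ directly from the symplectic form. On the smooth locus of $\pi$ the Lagrangian condition gives $\sigma^{-1}(\pi^*\Omega_B^1)\subset T_{M/B}$, so one obtains a morphism $\pi^*\Omega_B^1\to\Omega_M^1$ landing in the annihilator of $T_{M/B}$, which extends to all of $M$ via the globally defined form $\sigma$; taking wedge powers and applying $R\pi_*$ produces the desired map. Reflexivity then closes the argument: $R^i\pi_*\CO_M\simeq R^i\pi_*\omega_M$ is torsion-free by Koll\'ar's theorem, $\Omega_B^i$ is locally free on the smooth base $B$, and the morphism is an isomorphism over the dense open $B^\circ$ by the previous paragraph, hence on all of $B$.

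The hard part is this globalization step. Writing down the map in a coordinate-free manner and verifying its compatibility with the Hodge-filtration identification on the smooth locus requires tracking how Saito's minimal extension of $\CV^{\,n+i}$ interacts with the symplectic structure across singular fibers. An equivalent reformulation is that one must prove $\gr_{-n}^F\CP_i\simeq\omega_B^{-1}\otimes\Omega_B^i$ as coherent sheaves on all of $B$---a statement essentially equivalent to Matsushita's theorem itself and genuinely beyond the formal Hodge-module inputs of the preceding paragraphs.
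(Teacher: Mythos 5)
The first thing to observe is that the paper does not reprove this isomorphism at all: its ``proof'' consists of citing Matsushita \cite{Ma3} for the projective case and noting that the only use of projectivity of $M$ there is Koll\'ar's splitting $R\pi_*\omega_M\simeq\bigoplus_i R^i\pi_*\omega_M[-i]$, which by Saito \cite{SK} holds for an arbitrary projective morphism. Your proposal instead attempts a proof from scratch, and it has a genuine gap exactly where your last paragraph locates it. The closing step of your third paragraph---$R^i\pi_*\CO_M$ is torsion-free by Koll\'ar, $\Omega_B^i$ is locally free, the map is an isomorphism over the dense open $B^\circ$, hence an isomorphism everywhere---is not valid sheaf theory: multiplication by $t$ on $\CO_{\BA^1}$ is a generically bijective map of locally free (hence torsion-free) sheaves with nonzero cokernel. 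Torsion-freeness of the target only gives injectivity of your map; the cokernel can be supported on the discriminant, which for a Lagrangian fibration is in general a \emph{divisor}, so no reflexivity or codimension-$\geq 2$ argument can rescue this. Controlling $R^i\pi_*\CO_M$ along the discriminant is precisely the content of Matsushita's theorem (his argument proceeds quite differently, via duality and an analysis of the discriminant locus), and your final paragraph in effect concedes that this is where the real work lies.

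A secondary point: the claim that $\gr^F_j\CP_i=0$ for $-2n\le j<-n$, which you need in order to see that $\gr^F_{-2n}\DR(\CP_i)$ is a sheaf concentrated in degree $0$, does not follow formally from the stated vanishing of the complexes $\gr^F_k\DR(\CP_i)$ for $k<\max\{-2n,-2n-i\}$: a complex can be acyclic without its terms vanishing. The assertion is true (the paper uses it in the proof of Proposition~\ref{prop2.3}), but it rests on the structure of the Hodge filtration on minimal extensions and, in particular, requires an argument for the summands of $\CP_i$ supported on proper subvarieties of $B$, where the filtration conventions are shifted. Your reduction to the single graded piece $\gr^F_{-n}\CP_i\otimes\omega_B$ and its generic identification with $\Omega^i_{B^\circ}$ are fine and consistent with \eqref{saito_formula} and \eqref{2.3_2}, but they recover only what is already known on the smooth locus; the theorem itself is not reached.
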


\begin{proof}
Matsushita's proof assumes that $M$ is projective. However, as he explained in \cite[Remark~2.10]{Ma3}, the projectivity for $M$ is only used for Koll\'ar's decomposition \cite{K}
\[
R\pi_* \omega_M \simeq \bigoplus_i R^i\pi_*\omega_M[-i].
\]
Since  the decomposition now holds for any projective morphism as a consequence of Saito's theory of Hodge modules \cite{SK}, we may safely remove the projectivity assumption for $M$ in Matsushita's theorem.
\end{proof}

\begin{prop}\label{prop2.3}
The case $k=2n$ of Conjecture~\ref{conj} is equivalent to \eqref{Matsushita}. In particular, Conjecture~\ref{conj} holds when~$k = 2n$.
\end{prop}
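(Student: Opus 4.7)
The plan is to compute $\CG_{2n, i}$ explicitly using the structure of the top Hodge module $\CP_n$, and then use Saito's formula at $k = 2n$ to collect all $\CG_{j, 2n}$ into $R\pi_*\CO_M$ and match the resulting decomposition with Matsushita's identity.

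The first task is to identify $P_n^H$. By relative hard Lefschetz applied to the decomposition \eqref{DT}, there is an isomorphism $P_n^H \simeq P_{-n}^H(-n)$ of Hodge modules. The bottom perverse cohomology $P_{-n}^H$ is forced to equal the constant Hodge module $\BQ_B^H[n]$: on the smooth locus $B^\circ$ of $\pi$ one has $P_{-n}|_{B^\circ} \simeq R^0\pi_*\BQ_M[n] \simeq \BQ_{B^\circ}[n]$ since the fibers are connected, and the polarization on $P_{-n}^H$ rules out extra simple summands supported on proper closed subsets of $B$. Consequently, $\CP_n \simeq \CO_B$ as a $\CD_B$-module with Hodge filtration satisfying $F_{-n-1}\CP_n = 0$ and $F_{-n}\CP_n = \CO_B$, so that $\gr^F_{-n}\CP_n \simeq \CO_B$ and all other Hodge graded pieces vanish. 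Feeding this into the de Rham complex, only one term of $\gr^F_{-i}\DR(\CP_n)$ survives, namely $\gr^F_{-n}\CP_n \otimes \Omega^{i-n}_B \simeq \Omega^{i-n}_B$ in cohomological degree $i - 2n$. After the $[-n]$ shift in the definition of $\CG_{2n,i}$, one obtains
\[
\CG_{2n, i} \simeq \Omega^{i-n}_B[n-i] \quad \text{for } i \in [n, 2n], \qquad \CG_{2n, i} = 0 \quad \text{for } i < n,
\]
where the vanishing for $i < n$ also follows from the bounds on $\gr^F \DR(\CP_n)$ stated in Section~\ref{Sec2}.

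Next I would apply Saito's formula \eqref{saito_formula} with $k = 2n$. Together with the symplectic trivialization $\omega_M \simeq \CO_M$ induced by $\sigma^n$, it produces
\[
R\pi_*\CO_M \simeq \bigoplus_{j=0}^{2n} \CG_{j, 2n} \quad \text{in } D^b\Coh(B).
\]
Under Conjecture~\ref{conj} at $k = 2n$, substituting $\CG_{j, 2n} \simeq \CG_{2n, j} \simeq \Omega^{j-n}_B[n-j]$ and summing yields $R\pi_*\CO_M \simeq \bigoplus_{m=0}^n \Omega^m_B[-m]$; extracting cohomology sheaves recovers Matsushita's identity $R^m\pi_*\CO_M \simeq \Omega^m_B$. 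For the converse, given Matsushita the canonical cohomological decomposition $R\pi_*\CO_M \simeq \bigoplus_m \Omega^m_B[-m]$ (from Saito/Koll\'ar splitting) must agree, summand by summand in matching cohomological degrees, with the Saito decomposition $\bigoplus_j \CG_{j, 2n}$; combined with the smooth-locus description of $\CG_{j,2n}$ from Section~\ref{Sec1}, this forces $\CG_{j, 2n} \simeq \Omega^{j-n}_B[n-j] \simeq \CG_{2n, j}$.

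The main obstacle is the rigidity statement for $P_n^H$: ruling out extra simple Hodge module summands of $P_n^H$ (equivalently $P_{-n}^H$) supported on proper closed subsets of $B$. This exploits the polarization on $P_{-n}^H$ together with the fact that the full-support summand $\BQ_B^H[n]$ is distinguished among the simple constituents. A secondary subtle point is the summand-by-summand matching in the converse direction of the equivalence, for which the explicit identification of $\CG_{2n,j}$ already computed, together with the degree-concentration observed on $B^\circ$, is the key input.
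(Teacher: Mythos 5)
Your computation of $\CG_{2n,i}$ is correct and is exactly what the paper does: $P_n^H\simeq\BQ_B^H[n](-n)$, so only the graded piece at level $-n$ survives and $\CG_{2n,i}\simeq\Omega_B^{i-n}[n-i]$. (Your justification that $P_{-n}^H=\BQ_B^H[n]$ has no extra summands via ``the polarization'' is not really the operative mechanism --- the standard argument bounds the ordinary cohomology sheaves of $P_n$ by $\CH^j(P_n)\subset R^{j+3n}\pi_*\BQ_M=0$ for $j>-n$, forcing $P_n$ to be a shifted local system, and then uses relative hard Lefschetz --- but this is a standard fact the paper also takes for granted.) The forward direction of the equivalence then goes through as you describe.

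The genuine gap is in your converse direction. You reduce to matching the two decompositions $\bigoplus_j\CG_{j,2n}\simeq R\pi_*\CO_M\simeq\bigoplus_m\Omega_B^m[-m]$ ``summand by summand in matching cohomological degrees,'' and you flag this as the subtle point, proposing to close it with ``the degree-concentration observed on $B^\circ$.'' That is insufficient: degree concentration on the smooth locus does not give degree concentration on $B$ (the example in Section~\ref{Sect2.4} shows a graded de Rham complex with two nonzero terms globally that only collapses to a sheaf after a nontrivial computation), and without knowing that each $\CG_{j,2n}$ sits in the single degree $j-n$ you cannot extract the individual summands from the total sum. The fact you need --- and the one the paper uses to make both directions immediate --- is that $\gr^F_{-2n}\DR(\CP_{i-n})$ is always a sheaf concentrated in degree $0$. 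This holds because $-2n$ is the \emph{minimal} Hodge level (the paper records $\gr^F_k\DR(\CP_i)=0$ for $k<-2n$), and at the minimal level the graded de Rham complex degenerates to its last term $F_{-n}\CP_{i-n}\otimes\omega_B$. Combined with Saito's formula and the Koll\'ar--Saito splitting, this gives the unconditional identification $\CG_{i,2n}\simeq R^{i-n}\pi_*\omega_M[n-i]\simeq R^{i-n}\pi_*\CO_M[n-i]$, after which the equivalence with \eqref{Matsushita} is a tautology and no summand-matching argument is needed. With that single observation inserted, your proof closes and coincides with the paper's.
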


\begin{proof}
When $k=2n$, the desired isomorphism of Conjecture~\ref{conj} is
\begin{equation}\label{2.3_1}
\gr^F_{-2n}\DR(\CP_{i-n})[n-i] \simeq \gr^F_{-i}\DR(\CP_{n})[-n].
\end{equation}
On one hand, the Hodge module $P_n^H$ is the $\supth{(-n)}$ Tate twist of $\BQ_B^H[n]$
\[
P^H_n \simeq \BQ_B^H[n](-n) = (\CO_B, F_{\bullet -n}),
\]
so the right-hand side of \eqref{2.3_1} is
\[
\gr^F_{-i}\DR(\CP_{n})[-n] \simeq  \Omega_B^{i-n}[n-i].
\]
On the other hand, since $\gr^F_{-2n}\DR(\CP_{i-n})$ is always a sheaf (concentrated in degree 0), combining \eqref{2.1_1} and~\eqref{saito_formula}, we obtain that the left-hand side of \eqref{2.3_1} is 
\begin{equation}\label{2.3_2}
\gr^F_{-2n}\DR(\CP_{i-n})[n-i] \simeq R^{i-n}\pi_*\omega_M [n-i] \simeq R^{i-n}\pi_*\CO_M [n-i].
\end{equation}
Consequently, \eqref{2.3_1} is equivalent to \eqref{Matsushita}.
\end{proof}

\begin{rmk}
In view of \eqref{saito_formula}, Conjecture~\ref{conj} would provide a new recipe for calculating the higher direct images of $\Omega^k_M$ for all $k$, extending Matsushita's work. We could use the perverse--Hodge symmetry to trade the contributions of $\gr_{-k}^F\DR(\CP_i)$ for all $i$ for the contributions of~$\gr_{-i}^F\DR(\CP_{k - n})$ for all $i$. The latter has the advantage of only involving a single Hodge module~$P_{k - n}^H$.
\end{rmk}

\subsection{An example}\label{Sect2.4}
To illustrate the subtleties when extending Theorem~\ref{thm0.3} to the singular fibers, we consider the following basic example.

Let $\pi\colon  S \to B$ be an elliptic fibration of a symplectic surface. We assume that $\pi$ only has singular fibers with a double point, over a finite set $D \subset B$. Let $j\colon  U = B \backslash D \hookrightarrow B$ be the open embedding.

We look at the symmetry
\begin{equation} \label{toy}
\gr^F_{-1}\DR(\CP_{-1})[1] \simeq \gr^F_0\DR(\CP_0)
\end{equation}
proposed by Conjecture~\ref{conj}. Since the Hodge module $P_{-1}^H$ is the trivial Hodge module $\BQ_B^H[1]$, the left-hand side of \eqref{toy} is $\Omega_B^1[1]$. For the right-hand side, let $(\CV, F_\bullet, V)$ denote the variation of Hodge structures~$R^1\pi_{U*}\BQ_{S_U}$ on $U$. Then the Hodge module $P_0^H$ on $B$ is the minimal extension~$(j_{!*}\CV, F_\bullet)$, which can be described concretely using Deligne's canonical extension.

Recall that the canonical extension depends on a real interval $[a, a+1)$ or $(a, a + 1]$ where the eigenvalues of the residue endomorphism should lie. In our situation, the monodromy around each point of $D$ is unipotent (given by the matrix $(\begin{smallmatrix} 1&1\\0&1 \end{smallmatrix})$ in local coordinates), so the eigenvalues are necessarily integers. Let~$\overline{\CV}$ be the canonical extension of $\CV$ with respect to either $[0, 1)$ or~$(-1, 0]$; it is locally free of rank $2$ on $B$. Schmid's theorem says that~$F_\bullet \overline{\CV} := j_*F_\bullet \CV \cap \overline{\CV}$ is a filtration by locally free subsheaves. By \cite[Section~3.10]{S2},
we have
\[
j_{!*}\CV = \CD_B \cdot \overline{\CV} \subset \overline{\CV}(*D), 
\]
where the $\CD_B$-action is induced by Deligne's meromorphic connection on $\overline{\CV}(*D)$, and
\[
F_kj_{!*}\CV = \sum_{i \geq 0} F_i\CD_B \cdot F_{k - i}\overline{\CV}.
\]

It follows that for the right-hand side of \eqref{toy}, we have
\begin{equation}\label{gr0}
\gr_0^F\DR(\CP_0) = \left[\frac{\overline{\CV} + F_1\CD_B \cdot F_{-1}\overline{\CV}}{F_{-1}\overline{\CV}} \xrightarrow{\;\overline{\nabla}\;} \frac{F_1\CD_B \cdot \overline{\CV} + F_2\CD_B \cdot F_{-1}\overline{\CV}}{\overline{\CV} + F_1\CD_B \cdot F_{-1}\overline{\CV}}\otimes \Omega_B^1\right]\![1].
\end{equation}
In particular, as a complex it has two nontrivial terms. But $\overline{\nabla}$ is clearly surjective; to see its kernel, we do a calculation in local coordinates. Let $t$ be the local coordinate of $B$ near $0 \in D$, and let $\alpha, \beta$ be a local trivialization of $\overline{\CV}$. Since the monodromy matrix around $0$ is $(\begin{smallmatrix} 1&1\\0&1 \end{smallmatrix})$, the residue matrix for $\overline{\CV}$ is $(\begin{smallmatrix} 0&1/2\pi\sqrt{-1}\\0&0 \end{smallmatrix})$. In other words, we have
\[
\nabla\alpha = 0, \quad \nabla\beta = \frac{1}{2\pi\sqrt{-1}}\alpha \otimes \frac{dt}{t}.
\]
We also have
\[F_{-1}\overline{\CV} = \langle f(t)\alpha + g(t)\beta \rangle,\]
where $f(t), g(t)$ are holomorphic functions with $g(t)$ nonvanishing. From this we see that
\[F_1\CD_B \cdot \overline{\CV} = \overline{\CV} + F_1\CD_B \cdot F_{-1}\overline{\CV},\]
hence
\begin{equation} \label{gr01}
\overline{\nabla}\left(\frac{\overline{\CV}}{F_{-1}\overline{\CV}}\right) = 0.
\end{equation}
On the other hand, the map $\overline{\nabla}$ induces an isomorphism
\begin{equation} \label{gr02}
\overline{\nabla}\colon  \frac{\overline{\CV} + F_1\CD_B \cdot F_{-1}\overline{\CV}}{\overline{\CV}} \xrightarrow{\;\simeq\;} \frac{F_1\CD_B \cdot \overline{\CV} + F_2\CD_B \cdot F_{-1}\overline{\CV}}{F_1\CD_B \cdot \overline{\CV}}\otimes \Omega_B^1
\end{equation}
sending $\frac{1}{t}\alpha$ to $-\frac{1}{t^2}\alpha \otimes dt$. Combining \eqref{gr01} and \eqref{gr02}, we deduce that the kernel of $\overline{\nabla}$ in \eqref{gr0} is
\[\ker(\overline{\nabla}) = \frac{\overline{\CV}}{F_{-1}\overline{\CV}} = \gr_0^F\overline{\CV}.\]

Finally, by \cite[Theorem 2.6]{K} and \eqref{Matsushita}, we have
\[\gr_0^F\overline{\CV} \simeq R^1\pi_*\CO_S \simeq \Omega_B^1, 
\]
which yields the desired isomorphism \eqref{toy} only(!) in the derived category $D^b\Coh(B)$. 

Note that the proof in Section~\ref{Sec3.1} works for \emph{all} symplectic surfaces $S$ with $\pi\colon  S \to B$ and does not rely on information about the singular fibers. 

\section{Hilbert schemes of points}
In this section we prove Theorem~\ref{thm0.4}; it is completed by a series of compatibility results regarding the perverse--Hodge symmetry and natural geometric operations.

\subsection{Surfaces}\label{Sec3.1}
We first verify Theorem~\ref{thm0.4} for $n=1$, where the Lagrangian fibration is an elliptic surface $\pi\colon  S \to B$. 
It suffices to prove Conjecture~\ref{conj} for
\[
0 \leq i < k \leq 2.
\]
The cases when $k = 2$ were covered by Proposition~\ref{prop2.3}. Therefore, it remains to show the symmetry \eqref{toy} for~$S$, whose left-hand side is
\[
\gr_{-1}^F \DR(\CP_{-1})[1] \simeq \Omega_B^1[1].
\]
The right-hand side can be computed via the duality \eqref{duality}:
\[
\gr^F_{0}\DR(\CP_0) \simeq
R\mathcal{H}\mathrm{om}_{\CO_B}\left(\gr^F_{-2}\DR(\CP_0), \omega_B[1]\right) \simeq \Omega_B^1[1],
\]
where the last isomorphism follows from \eqref{2.3_2}.

\subsection{Closed embeddings and finite morphisms}
Let $X$ be an irreducible quasi-projective variety of dimension $d$. Let \begin{equation}\label{PHS}
    \left\{Q^{H}_i = (\CQ_i, F_\bullet)\right\}_{i\in \BZ}
\end{equation}
be a finite sequence of Hodge modules on $X$. We say that the Hodge modules in \eqref{PHS} are \emph{perverse--Hodge symmetric} (PHS for short) on $X$ if for any $i,k\in \BZ$ we have
\[
\gr^F_{-k} \DR(\CQ_{i-d})[d - i] \simeq \gr^F_{-i} \DR(\CQ_{k-d})[d-k].\footnote{By \cite[Lemma 7.3]{Sch}, the functor $\gr_k\DR(-)$ is well defined for possibly singular varieties.}
\]
Clearly Conjecture~\ref{conj} is equivalent to the statement that the Hodge modules $P^H_i$ given by the decomposition theorem \eqref{DT} are PHS on $B$. In general we say that a morphism $f\colon  X \to Y$ is PHS if the trivial Hodge module $\BQ^H_X[\dim X]$ is pure on $X$ and the Hodge modules obtained from the decomposition theorem of~$f_+\BQ^H_X[\dim X]$ are PHS on $Y$.

The following proposition shows the compatibility between the perverse--Hodge symmetry and push-forwards along closed embeddings and finite morphisms.

\begin{prop}\label{prop3.1}
Assume that the Hodge modules $Q^{H}_i = (\CQ_i, F_\bullet)$ are PHS on $Z$.
\begin{enumerate}
    \item\label{prop3.1-1} Let $\iota\colon  Z \hookrightarrow X$ be a closed embedding of codimension $c$. Then the Hodge modules 
\[
Q_i'^H = \iota_{+} Q^{H}_i (-c) 
\]
are PHS on $X$.
    \item\label{prop3.1-2} If $f\colon  Z \to X$ is a finite surjective morphism with $\dim X = \dim Z$, then the Hodge modules 
    \[
Q_i'^H =   f_+Q^H_i
    \]
are PHS on $X$.
\end{enumerate}

\end{prop}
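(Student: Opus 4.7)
My plan is to reduce both parts to the PHS hypothesis on $Z$ by pushing it forward via Saito's formula \eqref{saito_for}, which for any projective morphism $g$ gives $Rg_*\gr^F_k\DR_Z(-) \simeq \gr^F_k\DR_X(g_+-)$. Both $\iota$ and $f$ are projective, and since $\iota$ is a closed embedding (resp.\ $f$ is finite), the $\CD$-module pushforward $\iota_+$ (resp.\ $f_+$) is exact and concentrated in degree zero, so each $\CQ'_j$ is a single Hodge module on $X$ and the candidate sequence is well defined.

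For part (\ref{prop3.1-2}), where $\dim Z = \dim X = d$ and no Tate twist is present, the argument is essentially immediate: apply $Rf_*$ to the PHS identity on $Z$,
\[
\gr^F_{-k}\DR_Z(\CQ_{i-d})[d-i]\simeq\gr^F_{-i}\DR_Z(\CQ_{k-d})[d-k],
\]
and invoke Saito's formula on both sides to obtain the PHS identity for the sequence $\CQ'_j = f_+\CQ_j$ on $X$.

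For part (\ref{prop3.1-1}), I would carry out the same strategy but keep track of the Tate twist and the dimension shift. The twist shifts the filtration by $F_k(V(-c)) = F_{k+c}(V)$, and hence $\gr^F_{-K}(\CQ'_j) = \gr^F_{-(K-c)}(\iota_+\CQ_j)$; meanwhile the dimension shift $d_X = d_Z + c$ gives $I - d_X = (I-c) - d_Z$. Under these reindexings, the desired PHS identity on $X$ for the pair $(I,K)$ unwinds, via Saito's formula, to $R\iota_*$ applied to the PHS identity on $Z$ at the shifted pair $(i',k') = (I-c, K-c)$; the cohomological shifts match because $d_Z - (I-c) = d_X - I$ and $d_Z - (K-c) = d_X - K$.

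The main obstacle is purely bookkeeping: aligning the Tate-twist shift on the filtration with the codimension shift in $\dim X - \dim Z$. There is no new input beyond Saito's formula and the formal behavior of Tate twists; once the indices are aligned, the isomorphisms follow on both sides.
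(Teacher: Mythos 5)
Your proposal is correct and follows essentially the same route as the paper: both parts reduce to the PHS hypothesis on $Z$ via the compatibility of $\gr^F\DR$ with proper pushforward (Saito's formula), with the Tate-twist shift $\gr^F_{-K}(\,\cdot\,(-c)) = \gr^F_{-(K-c)}(\,\cdot\,)$ cancelling against the reindexing $I - \dim X = (I-c) - \dim Z$ exactly as in the paper's computation. The only cosmetic difference is that the paper writes out part (1) and declares part (2) ``completely parallel,'' whereas you treat (2) first; the index bookkeeping you record is identical to the paper's.
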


\begin{proof}
We only prove~\eqref{prop3.1-1} as~\eqref{prop3.1-2} is completely parallel. For a closed embedding $\iota\colon  Z \hookrightarrow X$, we have
\[
\begin{split}
     \gr^F_{-k} \DR(\CQ_{i-d}')[d-i]   & \simeq \iota_* \gr^F_{-k} \DR(\CQ_{i-d}(-c))[d-i]  \\
    & \simeq \iota_* \gr^F_{-{(k-c)}} \DR(\CQ_{(i-c)-\dim Z})[\dim Z-(i - c)].
\end{split}
\]
Similarly,  
\[
 \gr^F_{-i} \DR(\CQ_{k-d}')[d-k]   \simeq \iota_* \gr^F_{-{(i-c)}} \DR(\CQ_{(k-c)-\dim Z})[\dim Z-(k - c)].
\]
The proposition then follows from the isomorphism 
\[
\gr^F_{-{(k-c)}} \DR(\CQ_{(i-c)-\dim Z})[\dim Z - (i-c)] \simeq  \gr^F_{-{(i-c)}} \DR(\CQ_{(k-c)-\dim Z})[\dim Z - (k - c)]
\]
given by the assumption.
\end{proof}

\subsection{External products}
Let $X$ and $Y$ be quasi-projective varieties, and let 
\[
P^H = (\CP, F_\bullet), \quad  Q^H = (\CQ, F_\bullet)
\]
be Hodge modules on $X$ and $Y$, respectively. We recall the following standard lemma concerning the external product
\[
\CP\boxtimes \CQ = \pr_X^* \CP \otimes \pr_Y^* \CQ
\]
on $X\times Y$.

\begin{lem}\label{lem3.2}
We have
\[
\gr^F_k\DR(\CP\boxtimes \CQ) \simeq \bigoplus_{i+j=k} \gr^F_i\DR(\CP)\boxtimes \gr^F_j\DR(\CQ) \in D^b\Coh(X\times Y).
\]
\end{lem}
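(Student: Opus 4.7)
The plan is to verify the isomorphism already at the level of complexes by identifying both sides with the total complex of the same bigraded object on $X \times Y$; the statement in $D^b\Coh(X \times Y)$ is then immediate.

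I would rely on three standard ingredients. First, the Hodge filtration on the external product of filtered $\CD$-modules is the convolution filtration
\[
F_m(\CP \boxtimes \CQ) \;=\; \sum_{a+b=m} F_a \CP \boxtimes F_b \CQ,
\]
so that $\gr^F_m(\CP \boxtimes \CQ) \simeq \bigoplus_{a+b=m} \gr^F_a \CP \boxtimes \gr^F_b \CQ$. This is built into Saito's definition of the external product of Hodge modules. Second, the K\"unneth decomposition $\Omega^n_{X \times Y} \simeq \bigoplus_{p+q=n} \Omega_X^p \boxtimes \Omega_Y^q$ of the de Rham algebra. Third, under the factorization $\CD_{X \times Y} \simeq \CD_X \boxtimes \CD_Y$, the integrable connection on $\CP \boxtimes \CQ$ splits as $\nabla = \nabla_\CP \boxtimes 1 + 1 \boxtimes \nabla_\CQ$, so the induced $\CO$-linear map $\overline{\nabla}$ on associated graded pieces splits accordingly.

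Combining these, I would write $\gr^F_k \DR(\CP \boxtimes \CQ)$ term-by-term: the cohomological degree $-\dim(X \times Y) + j$ piece equals
\[
\bigoplus_{\substack{p+q=j \\ a+b=k+j}} \gr^F_a \CP \otimes \Omega_X^p \;\boxtimes\; \gr^F_b \CQ \otimes \Omega_Y^q.
\]
Re-indexing by $i = a - p$ and $j' = b - q$ turns the two constraints into the single equation $i + j' = k$ (independent of $p, q$) and exhibits each summand as the $(p, q)$ bidegree piece of $\gr^F_i \DR(\CP) \boxtimes \gr^F_{j'} \DR(\CQ)$. The third ingredient then guarantees that the induced differential $\overline{\nabla}$ matches the total differential on $\bigoplus_{i+j'=k} \gr^F_i \DR(\CP) \boxtimes \gr^F_{j'} \DR(\CQ)$, up to the standard Koszul signs arising from moving $\Omega_Y^1$ past $\Omega_X^p$.

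The only genuinely non-formal input is the first one, namely the convolution description of the Hodge filtration on the external product; this is a standard feature of Saito's formalism and is where I expect the only real content of the proof to lie. Everything else is multilinear bookkeeping together with a Koszul-sign check. Since the identification constructed above is term-by-term as complexes of coherent sheaves on $X \times Y$, it automatically descends to the isomorphism claimed in $D^b\Coh(X \times Y)$.
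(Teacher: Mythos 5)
Your argument is correct and is essentially the paper's: the paper's entire proof is a one-line citation of the compatibility of the filtered de Rham functor with external products (\cite[Equation~(1.4.1)]{MSS}), which is precisely the statement you verify by hand via the convolution filtration, the K\"unneth decomposition of $\Omega^\bullet_{X\times Y}$, and the splitting of $\nabla$. Your term-by-term identification (including the re-indexing $i=a-p$, $j'=b-q$ and the Koszul signs) is a correct unpacking of that cited fact, so the two proofs differ only in that you supply the details the paper delegates to the reference.
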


\begin{proof}
This follows from the fact that the (filtered) de Rham functor is compatible with taking external product (\textit{cf.}~\cite[Equation~(1.4.1)]{MSS}).
\end{proof}

Now we consider projective morphisms
\begin{equation}\label{f_i}
f_j\colon  X_j \lra Y_j, \quad  j = 1,2, \dots, n
\end{equation}
with $X_j$ nonsingular. For each $f_j$, we have the Hodge modules $Q^H_{i,j}$ obtained from the decomposition theorem
\[
f_+ \BQ^H_{X_j}[\dim X_j] \simeq \bigoplus_i Q^H_{i,j}[-i],\quad Q^H_{i,j} = \CH^i\left(f_+\BQ_{X_j}^H[\dim X_j]\right).
\]

We show the compatibility between the perverse--Hodge symmetry and products of varieties.

\begin{prop}\label{Prop3.3}
If the morphisms \eqref{f_i} are PHS, then the product morphism
\[
f = \Pi_j f_j\colon    X = X_1\times X_2 \times \cdots \times X_n \to  Y = Y_1\times Y_2 \times \cdots \times Y_n
\]
is also PHS.
\end{prop}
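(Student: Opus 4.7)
The plan is to reduce to the case $n = 2$ by a straightforward induction and then combine a Künneth-type decomposition for $f_+ \BQ_X^H[\dim X]$ with Lemma~\ref{lem3.2} and the PHS property of each $f_j$.

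For $n = 2$, I would first invoke the compatibility of the direct image functor for Hodge modules with external products, together with the identity $\BQ_X^H[\dim X] \simeq \BQ_{X_1}^H[\dim X_1] \boxtimes \BQ_{X_2}^H[\dim X_2]$, to obtain
\[
f_+ \BQ_X^H[\dim X] \simeq (f_1)_+ \BQ_{X_1}^H[\dim X_1] \boxtimes (f_2)_+ \BQ_{X_2}^H[\dim X_2].
\]
Inserting the decomposition theorem applied to each factor and matching shifts on both sides then gives
\[
Q_i^H \simeq \bigoplus_{i_1 + i_2 = i} Q_{i_1, 1}^H \boxtimes Q_{i_2, 2}^H,
\]
so that in particular $\CQ_{i-d} \simeq \bigoplus_{i_1 + i_2 = i} \CQ_{i_1 - d_1, 1} \boxtimes \CQ_{i_2 - d_2, 2}$ where $d_j = \dim Y_j$ and $d = d_1 + d_2$.

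Next I would compute the perverse--Hodge complex $\gr^F_{-k} \DR(\CQ_{i-d})[d-i]$. By Lemma~\ref{lem3.2}, each external summand splits further as
\[
\bigoplus_{k_1 + k_2 = k} \gr^F_{-k_1} \DR(\CQ_{i_1 - d_1, 1})[d_1 - i_1] \boxtimes \gr^F_{-k_2} \DR(\CQ_{i_2 - d_2, 2})[d_2 - i_2],
\]
where the shift $[d - i]$ has been distributed across the two factors using $d - i = (d_1 - i_1) + (d_2 - i_2)$. The PHS hypothesis applied term-by-term to each $f_j$ supplies an isomorphism swapping $i_j$ with $k_j$ inside each tensor factor. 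Relabeling the summation indices so that the roles of $(i_1, i_2)$ and $(k_1, k_2)$ are exchanged reassembles the resulting direct sum — via the same combination of the Künneth decomposition for $\CQ_{k-d}$ and Lemma~\ref{lem3.2} — into $\gr^F_{-i} \DR(\CQ_{k-d})[d-k]$, yielding the desired symmetry.

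The proof is essentially formal once the Künneth decomposition of $Q_i^H$ is in place, so there is no deep obstacle. The step requiring the most care is simply the bookkeeping: confirming that the compatibility of $f_+$ with external products holds in Saito's category (standard, compare \cite[§1.4]{MSS}), that the shift $[d - i]$ distributes correctly across the product, and that the filtration indices in the Lemma~\ref{lem3.2} expansion align with the indices demanded by the individual PHS conditions. None of these are substantial, but a careful index-chase in the $n = 2$ case is advisable before invoking induction.
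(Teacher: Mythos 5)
Your proposal is correct and follows essentially the same route as the paper: the external-product (K\"unneth) decomposition of $f_+\BQ_X^H[\dim X]$, the splitting of $\gr^F\DR$ of an external product via Lemma~\ref{lem3.2}, and the term-by-term application of the PHS hypothesis with an exchange of the indices $i_j \leftrightarrow k_j$. The only cosmetic difference is that you reduce to $n=2$ by induction while the paper handles general $n$ in one step; the bookkeeping of shifts and filtration indices is identical.
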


\begin{proof}
Since 
\[
\BQ_X^H [\dim X] \simeq \boxtimes_{j=1}^n \BQ^H_{X_i}[\dim X_i],
\]
we have
\[
f_+ \BQ_X^H [\dim X] \simeq \boxtimes_{j=1}^n f_{j+} \BQ^H_{X_j}[\dim X_j].
\]
Therefore, from
the decomposition theorem
\[
f_+  \BQ_X^H [\dim X] \simeq \bigoplus_i W^H_i [-i], \quad W^H_i =  \CH^i\left(f_+\BQ_X^H[\dim X]\right), 
\]
we obtain each summand
\[
W^H_i \simeq \bigoplus_{i_1 + \dots +i_n = i}  Q^H_{i_1,1} \boxtimes Q^H_{i_2,2} \boxtimes \cdots \boxtimes Q^H_{i_n,n}.
\]
By Lemma~\ref{lem3.2} this further yields
\[
\gr_{-k}^F\DR(\CW_{i-\dim Y}) \simeq \bigoplus_{\substack{i_1+ \cdots +i_n = i\\k_1 + \dots +k_n = k}} \gr_{-k_1}^F\DR(\CQ_{i_1-\dim Y_1, 1}) \boxtimes \cdots \boxtimes  \gr_{-k_n}^F\DR(\CQ_{i_n-\dim Y_n, n}). \]
Using this decomposition and the fact that the $Q^H_{i,j}$ are PHS, we see that there is a one-to-one correspondence between the summands in the decompositions of 
\[
\gr_{-k}^F\DR(\CW_{i-\dim Y})[\dim Y - i],  \quad \gr_{-i}^F\DR(\CW_{k-\dim Y})[\dim Y - k], 
\]
respectively. This completes the proof.
\end{proof}

\begin{rmk}
If each $f_j\colon  X_j \to Y_j$ is a Lagrangian fibration, then the product $f\colon  X \to Y$ is also a Lagrangian fibration. Hence Proposition~\ref{Prop3.3} provides consistency checks for Conjecture~\ref{conj}; it shows that if each $f_j$ satisfies Conjecture~\ref{conj}, then their product satisfies it as well.
\end{rmk}

\begin{rmk}
We note that the only use of the nonsingular assumption in Proposition~\ref{Prop3.3} is that the trivial Hodge modules $\BQ_{X_j}^H[\dim X_j]$ on $X_j$ are pure. 
\end{rmk}

\subsection{Symmetric products}

Let $Q^H = (\CQ, F_\bullet)$ be a Hodge module on $X$. Its symmetric product $(Q^H)^{(n)}$ was introduced in \cite{MSS}, which defines a Hodge module on the symmetric product $X^{(n)}$ of the variety. Furthermore, such an operation is extended to the bounded derived category of Hodge modules on $X$.

\begin{prop}\label{Prop3.5}
If the $Q^H_i$ are PHS on $X$, then the $(Q^H_i)^{(n)}$ are PHS on $X^{(n)}$.
\end{prop}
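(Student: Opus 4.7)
The plan is to realize the symmetric product construction of \cite{MSS} as a composition of operations already shown to preserve the PHS property, together with extraction of a direct summand. Explicitly, I would form the external product $(Q^H)^{\boxtimes n}$ on $X^n$, push forward along the finite quotient $q\colon X^n \to X^{(n)}$, and take the sign-twisted $S_n$-invariant summand, which is by definition the symmetric product.

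First, the argument for Proposition~\ref{Prop3.3} is really a statement about external products of Hodge modules rather than about morphisms per se: Lemma~\ref{lem3.2} immediately implies that if $\{Q_{i,j}^H\}$ is PHS on $X_j$ for each $j$, then the external product assembles into a PHS sequence on $\prod_j X_j$. Specialising to $X_j = X$ with all factors equal yields that $(Q^H)^{\boxtimes n}$ is PHS on $X^n$. Because the PHS isomorphism is built componentwise via the natural commutativity of $\boxtimes$ with $\gr^F\DR$, it is equivariant for the $S_n$-action permuting the tensor factors, with the expected Koszul signs coming from the shifts $[-i]$ in the complex $\bigoplus_i Q_i^H[-i]$.

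Second, the quotient morphism $q\colon X^n \to X^{(n)}$ is finite surjective with $\dim X^n = \dim X^{(n)}$, so by Proposition~\ref{prop3.1}\eqref{prop3.1-2} the pushforward $q_+((Q^H)^{\boxtimes n})$ is PHS on $X^{(n)}$, and the PHS isomorphism remains $S_n$-equivariant on this pushforward. By the construction of \cite{MSS}, the symmetric product $(Q_i^H)^{(n)}$ is the sign-twisted $S_n$-invariant direct summand of $q_+((Q^H)^{\boxtimes n})$; any $S_n$-equivariant isomorphism restricts to such a summand, so the $(Q_i^H)^{(n)}$ inherit the PHS property on $X^{(n)}$.

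The subtlest point, and what I expect to be the main obstacle, is the $S_n$-equivariance claim: one must verify that the PHS isomorphism on $(Q^H)^{\boxtimes n}$, which simultaneously swaps the perverse index $i$ with the Hodge index $k$ across the $n$ factors, commutes with factor permutations up to the Koszul sign conventions implicit in \cite{MSS}'s definition of the symmetric product. Since both the decomposition of Lemma~\ref{lem3.2} and the $S_n$-action distribute symmetrically across the tensor factors, this is combinatorial bookkeeping rather than genuine new input, but it is where any hidden technical difficulty would reside.
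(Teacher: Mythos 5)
Your proposal follows essentially the same route as the paper: external product to get PHS on $X^n$ via Proposition~\ref{Prop3.3} (whose proof is indeed about external products of PHS collections, not just morphisms), pushforward along the finite quotient $q$ via Proposition~\ref{prop3.1}\eqref{prop3.1-2}, and passage to the $\mathfrak{S}_n$-invariant summand, with the only remaining issue being $\mathfrak{S}_n$-equivariance of the natural isomorphism $q_*\gr_k\DR((Q_i^H)^{\boxtimes n}) \simeq \gr_k\DR(q_+(Q_i^H)^{\boxtimes n})$. The paper resolves exactly this equivariance point by citing the compatibility of the filtered de Rham functor with the symmetric group action from the proof of Proposition~1.5 in \cite{MSS}, which is the same resolution you anticipate.
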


\begin{proof}
Let $q\colon  X^n \to X^{(n)}$ be the $\mathfrak{S}_n$-quotient map. For an object $\CF^\bullet \in D^b\Coh(X)$, we similarly consider the symmetric product
\[
(\CF^\bullet)^{(n)} = \left(q_*  {\CF^\bullet}^{\boxtimes n}\right)^{\mathfrak{S}_n} \in D^b\Coh(X^{(n)}).
\]
Now by Proposition~\ref{Prop3.3}, we know that the Hodge modules $(Q_i^H)^{\boxtimes n}$ are PHS on $X^n$. Proposition~\ref{prop3.1}\eqref{prop3.1-2} further implies that the Hodge modules $q_+  (Q_i^H)^{\boxtimes n} $ are PHS on $X^{(n)}$. To prove the corresponding property for 
\[
(Q^H_i)^{(n)} =  \left(q_+  (Q_i^H)^{\boxtimes n}\right)^\mathfrak{S_n}
\]
on $X^{(n)}$, it suffices to show that the natural isomorphism
\[
q_* \gr_k \DR\left( (Q_i^H)^{\boxtimes n} \right) \simeq \gr_k \DR\left( q_+  (Q_i^H)^{\boxtimes n} \right)
\]
is equivariant with respect to the $\mathfrak{S}_n$-actions. It follows from \cite{MSS} that the (filtered) de Rham functor is compatible with the symmetric group action on $X^n$; more precisely, see \cite[proof of Proposition 1.5]{MSS}.
\end{proof}


\subsection{Proof of Theorem~\ref{thm0.4}}
For our purpose, we describe the decomposition theorem associated with the morphism
\[
\pi^{[n]}\colon  S^{[n]} \xrightarrow{\;\,f\;} S^{(n)} \xrightarrow{\pi^{(n)}} C^{(n)}.
\]

The first map $f\colon  S^{[n]}\to S^{(n)}$ of the composition is semismall, and the associated decomposition theorem is calculated in \cite{Go2}, which we now review. For a partition 
\begin{equation}\label{partition}
\nu = 1^{a_1}2^{a_2}\cdots n^{a_n}
\end{equation}
of $n$, we use $S^{(\nu)}$ to denote the variety
\[
S^{(\nu)} = S^{(a_1)}\times S^{(a_2)} \times \cdots \times S^{(a_n)}.
\]
We consider the stratification of the target variety $S^{(n)}$ by the combinatorial types of the points:
\[
S^{(n)} = \bigsqcup_{\nu} S^{(n)}_\nu;
\]
there is a canonical finite surjective morphism
\[
\kappa_\nu\colon  S^{(\nu)} \lra \overline{S^{(n)}_\nu};
\]
see \cite[Section 3]{Go2}. We use $|\nu|$ to denote the length $\sum_ia_i$ of the partition \eqref{partition}. Then we have 
\[
\codim_{S^{(n)}}\left(\overline{S_\nu^{(n)}}\right) = 2(n-|\nu|).
\]
The main result of \cite{Go2} is the decomposition theorem
\begin{equation*}
    f_+ \BQ^H_{S^{[n]}}[2n] \simeq \bigoplus_{\nu} {\kappa_{\nu}}_+ \BQ^H_{S^{(\nu)}}[2|\nu|](|\nu|-n).
\end{equation*}

Composing with the symmetric product map $\pi^{(n)}\colon  S^{(n)} \to C^{(n)}$ induced by $\pi\colon  S \to C$, we have
\begin{equation}\label{GS}
\pi^{[n]}_+ \BQ^H_{S^{[n]}}[2n] \simeq \bigoplus_{\nu}  \pi^{(n)}_+{\kappa_{\nu}}_+ \BQ^H_{S^{(\nu)}}[2|\nu|](|\nu|-n).
\end{equation}

Now we consider the commutative diagram
\begin{equation*}
    \begin{tikzcd}[column sep=small]
    S^{(\nu)} \arrow[d, "\pi^{(\nu)}"] \arrow[rr, "\kappa_\nu"] & & \overline{S^{(n)}_\nu} \arrow[d, "\pi^{(n)}"] \\
      C^{(\nu)}  \arrow[rr, "\kappa'_\nu"] & & \overline{C^{(n)}_\nu}\rlap{,}
\end{tikzcd}
\end{equation*}
where $\kappa'_\nu\colon  C^{(\nu)} \to \overline{C^{(n)}_\nu}$ is defined analogously to $\kappa_\nu$, $\pi^{(\nu)}$ is induced by $\pi$, and the right vertical arrow is the restriction of $\pi^{(n)}\colon  S^{(n)} \to C^{(n)}$. Since
\[
\codim_{C^{(n)}}\left(\overline{C_\nu^{(n)}}\right) = \frac{1}{2} \codim_{S^{(n)}}\left(\overline{S_\nu^{(n)}}\right) = n-|\nu|,
\]
the right-hand side of \eqref{GS} can be expressed as 
\begin{equation}\label{decomp}
\bigoplus_{\nu}  {\kappa'_{\nu}}_+ \left( \pi^{(\nu)}_+ \BQ^H_{S^{(\nu)}}[\dim S^{(\nu)}] \left(-\codim_{C^{(n)}}\left(\overline{C_\nu^{(n)}}\right)\right) \right). 
\end{equation}

We complete the proof of Theorem~\ref{thm0.4} by showing that the Hodge modules given by each term in \eqref{decomp} are PHS on $C^{(n)}$. 

Since $\pi\colon  S \to C$ is PHS, by Proposition~\ref{Prop3.3}, $\pi^k\colon  S^k \to C^k $ is PHS for any $k \geq 1$. Combining with Proposition~\ref{Prop3.5}, we obtain that each symmetric product $\pi^{(k)}\colon  S^{(k)} \to C^{(k)}$ is PHS, which further yields that~$\pi^{(\nu)}\colon  S^{(\nu)} \to C^{(\nu)}$ is PHS by taking products. Equivalently, the Hodge modules $Q^H_{i,\nu}$ obtained from the decomposition theorem 
\begin{equation*}\label{decomp2}
\pi^{(\nu)}_+ \BQ^H_{S^{(\nu)}}\left[\dim S^{(\nu)}\right] \simeq \bigoplus_i Q^H_{i,\nu}[-i], \quad Q^H_{i,\nu}= \CH^i\left( \pi^{(\nu)}_+ \BQ^H_{S^{(\nu)}}\left[\dim S^{(\nu)}\right] \right)
\end{equation*}
are PHS on $C^{(\nu)}$. Finally, we push forward the Tate-twisted Hodge modules
\begin{equation}\label{HM0}
Q^H_{i, \nu} \left(-\codim_{C^{(n)}}\left(\overline{C_\nu^{(n)}}\right)\right)
\end{equation}
along the composition of the finite surjective map $C^{(\nu)} \to \overline{C^{(n)}_\nu}$ with the closed embedding~$\overline{C^{(n)}_\nu} \hookrightarrow C^{(n)}$. By Proposition~\ref{prop3.1}, the Hodge modules \eqref{HM0} are PHS on $C^{(n)}$ as desired, where the Tate twist in \eqref{HM0} is crucial. 
\qed

\section{Global cohomology and LLV algebras}
We specialize to Lagrangian fibrations $\pi\colon  M \to B$ associated with compact irreducible symplectic varieties. We prove Theorem~\ref{thm0.6}, which is the perverse--Hodge symmetry at the level of global cohomology. Our main tool is the Looijenga--Lunts--Verbitsky (LLV for short) Lie algebra \cite{Ver95, Ver96, LL}, a structure that is unique to compact irreducible symplectic varieties. 

\subsection{LLV (sub)algebras}
Let $M$ be a compact irreducible symplectic variety or, equivalently, a projective hyper-K\"ahler manifold. Assume  $\dim M = 2n$. We call an element~$\alpha \in H^2(M, \BC)$ of \emph{Lefschetz type} if for any~$k \geq 0$, cupping with $\alpha^k$ gives an isomorphism
\[
\alpha^k \cup\colon  H^{2n - k}(M, \BC) \xrightarrow{\;\simeq\;} H^{2n + k}(M, \BC).
\]
Such a class $\alpha$ induces an $\mathfrak{sl}_2$-triple $(L_\alpha, H, \Lambda_\alpha)$ acting on $H^*(M, \BC)$. The LLV algebra $\mathfrak{g}(M)$ is generated by all $\mathfrak{sl}_2$-triples associated with Lefschetz type classes. As is shown in \cite{Ver95, Ver96} and \cite{LL} independently, there is a natural isomorphism
\[\mathfrak{g}(M) \simeq \mathfrak{so}(b_2(M) + 2),\]
where $b_2(M)$ is the second Betti number of $M$.

Two subalgebras of $\mathfrak{g}(M)$ played a crucial role in establishing Theorem~\ref{thm1}. The first is Verbitsky's $\mathfrak{so}(5)$ generated by the $\mathfrak{sl}_2$-triples associated with the three K\"ahler classes~$\omega_I, \omega_J, \omega_K$. By \cite{Ver90}, the weight decomposition of $H^*(M, \BC)$ under Verbitsky's $\mathfrak{so}(5)$ coincides with the Hodge decomposition. Consider the Cartan subalgebra of this $\mathfrak{so}(5)$ spanned by
\[
H, H_F:=-\sqrt{-1}[L_{\omega_J}, \Lambda_{\omega_K}].
\]
Then the Hodge decomposition
\begin{equation} \label{hodged}
H^*(M, \BC) = \bigoplus_{i,j} H^{i, j}(M) = \bigoplus_{i, j}H^j(M, \Omega_M^i)
\end{equation}
satisfies
\[
H|_{H^{i,j}(M)} = (i + j - 2n) \id, \quad H_F|_{H^{i,j}(M)} = (i - j)\id.
\]

The second is the perverse $\mathfrak{so}(5)$ introduced in \cite{SY}, which concerns a Lagrangian fibration $\pi\colon  M \to B$. Let $\beta \in H^2(M, \BC)$ be the pull-back of an ample class on $B$, and let $\eta \in H^2(M, \BC)$ be a relatively ample class satisfying $q_M(\eta) = 0$; here $q_M(-)$ is the Beauville--Bogomolov--Fujiki form on $H^2(M, \BC)$. The perverse $\mathfrak{so}(5)$ is generated by the $\mathfrak{sl}_2$-triples associated with
\[
\eta + \beta,\; -\sqrt{-1}(\eta - \beta)
\]
and a third element $\rho \in H^2(M, \BC)$ satisfying
\[q_M(\rho) = q_M(\eta + \beta), \quad (\rho, \eta)_M = (\rho, \beta)_M = 0;\]
here $(-, -)_M$ is the bilinear form associated with $q_M(-)$.

By \cite[Theorem 3.1]{SY}, the weight decomposition of $H^*(M, \BC)$ under the perverse $\mathfrak{so}(5)$ takes the form
\begin{equation} \label{pervd}
H^*(M, \BC) = \bigoplus_{i, j} {^\mathfrak{p}H}^{i, j}(M) = \bigoplus_{i,j}H^{j - n}(B, P_{i - n}\otimes_\BQ \BC),
\end{equation}
where the $P_i$ are as in \eqref{decomp0}. In terms of the Cartan subalgebra spanned by
\[H, H_P:=-\sqrt{-1}\left[L_{\eta + \beta}, \Lambda_{-\sqrt{-1}(\eta - \beta)}\right],\]
we have
\[H|_{{^\mathfrak{p}H}^{i,j}(M)} = (i + j - 2n) \id, \quad H_P|_{{^\mathfrak{p}H}^{i,j}(M)} = (i - j)\id.\]

\subsection{Perverse--Hodge algebra}
Let $\pi\colon  M \to B$ be a Lagrangian fibration with $M$ a compact irreducible symplectic variety of dimension~$2n$. Since $B$ is of Picard rank $1$, the ample class~$\omega_{I} \in H^2(M, \BC)$ admits a unique decomposition $\omega_{I} = \eta + \beta$ with $\eta, \beta$ as in the previous section. We also have $\omega_J = \sigma + \overline{\sigma}$, where $\sigma$ is the symplectic form; hence $(\omega_J, \eta)_M = (\omega_J, \beta)_M = 0$. In particular, the perverse $\mathfrak{so}(5)$ can be generated by the $\mathfrak{sl}_2$-triples associated with $\omega_I = \eta + \beta, -\sqrt{-1}(\eta - \beta)$, and $\omega_J$.

We now consider the subalgebra $\mathfrak{g} \subset \mathfrak{g}(M)$ generated by the $\mathfrak{sl}_2$-triples associated with
\begin{equation} \label{four}
\omega_I = \eta + \beta,\; -\sqrt{-1}(\eta - \beta),\; \omega_J,\; \omega_K.
\end{equation}
We call it the \emph{perverse--Hodge algebra}; it is naturally isomorphic to $\mathfrak{so}(6)$ by the description of $\mathfrak{g}(M)$ in \cite[Theorem 11.1]{Ver95}. A Cartan subalgebra $\mathfrak{h} \subset \mathfrak{g}$ is spanned by
\begin{equation} \label{cartan}
H,\; H_P=-\sqrt{-1}\left[L_{\eta + \beta}, \Lambda_{-\sqrt{-1}(\eta - \beta)}\right],\; H_F=-\sqrt{-1}\left[L_{\omega_J}, \Lambda_{\omega_K}\right].
\end{equation}
We have the weight decomposition
\begin{equation} \label{pervhodged}
H^*(M, \BC) = \bigoplus_{i, k, d} H^{i, k, d}(M),
\end{equation}
so that
\[H|_{H^{i, k, d}(M)} = (d - 2n)\id, \quad H_P|_{H^{i, k ,d}(M)} = (2i - d)\id, \quad H_F|_{H^{i, k ,d}(M)} = (2k - d)\id.\]

The perverse--Hodge algebra $\mathfrak{g}$ contains both the perverse $\mathfrak{so}(5)$ and Verbitsky's $\mathfrak{so}(5)$ as subalgebras. Comparing \eqref{pervhodged} with \eqref{pervd} and \eqref{hodged}, we find
\[
H^{i, k, d}(M) = \gr_{-k}^FH^{d - 2n}(B, P_{i - n}\otimes_\BQ \BC[n - i]),
\]
where $F_{-k} = F^k$ is the Hodge filtration on the pure Hodge structure $H^{d - 2n}(B, P_{i - n}[n - i])$.

On the other hand, by Saito's formula \eqref{saito_for} applied to the Hodge module $P_{i - n}^H$ under the projection $f\colon  B \to \mathrm{pt}$, we have
\[
H^{d - 2n}\left(B, \gr_{-k}^F\DR(\CP_{i - n})[n - i]\right) \simeq \gr_{-k}^FH^{d - 2n}\left(B, P_{i - n}\otimes_\BQ \BC[n - i]\right).
\]
The left-hand side is precisely the cohomology of the perverse--Hodge complex $\CG_{i, k}$. We conclude that
\begin{equation} \label{weightid}
H^{i, k, d}(M) \simeq H^{d - 2n}(B, \CG_{i, k}).
\end{equation}

\subsection{Proof of Theorem~\ref{thm0.6}}
Via \eqref{weightid}, we have identified the cohomology groups $H^*(B, \CG_{i, k})$ with the weight spaces of $H^*(M, \BC)$ under the perverse--Hodge algebra $\mathfrak{g}$. Theorem~\ref{thm0.6} is then equivalent to the symmetry of the weight spaces
\begin{equation} \label{symm}
H^{i, k, d}(M) \simeq H^{k, i, d}(M).
\end{equation}

The symmetry is a feature of $\mathfrak{so}(6)$-representations. More concretely, consider the subspace
\[V = \langle\omega_I = \eta + \beta, -\sqrt{-1}(\eta - \beta), \omega_J, \omega_K \rangle \subset H^2(M, \BC)\]
equipped with the quadratic form $q_V = q_M|_V$. Set
\[\widetilde{V} = V \oplus \langle \mathbf{1} \rangle \oplus \langle\Omega\rangle, \quad q_{\widetilde{V}} = q_V \oplus \begin{pmatrix} 0 & -1 \\ -1 & 0 \end{pmatrix}.\]
By \cite[Theorem 11.1]{Ver95} we have a natural isomorphism  $\mathfrak{g} \simeq \mathfrak{so}(\widetilde{V})$. Up to renormalization, we may assume
\[(-\Omega, \mathbf{1})_{\widetilde{V}} = (\eta, \beta)_{\widetilde{V}} = (\sigma, \overline{\sigma})_{\widetilde{V}} = 1,\]
so that $-\Omega, \eta, \sigma, \mathbf{1}, \beta, \overline{\sigma}$ form a standard isotropic basis of $\widetilde{V}$. Under this basis, the three elements $H, H_P, H_F$ in \eqref{cartan} are precisely (twice) the basis of $\mathfrak{h} \subset \mathfrak{g}$ described in \cite[Section~18.1]{FH}.
Let~$H^*, H^*_P, H^*_F$ denote the dual basis of~$\mathfrak{h}^*$. By~\cite[Section~18.1, p.~271]{FH},
the Weyl group of $\mathfrak{g}$ is isomorphic to $(\BZ/2\BZ)^2 \rtimes \mathfrak{S}_3$, where the symmetric group $\mathfrak{S}_3$ permutes the three coordinate axes of $\mathfrak{h}^*$ and the generators of $(\BZ/2\BZ)^2$ act diagonally by $(1, -1, -1)$ and $(-1, -1, 1)$. In particular, there is an element of the Weyl group exchanging~$H^*_P, H^*_F$ while fixing $H^*$. Consequently, we obtain the symmetry of the weight spaces \eqref{symm} through the Weyl group action. \qed

\begin{rmk}
We have been kindly informed by Mirko Mauri that the isomorphism \eqref{symm} can be obtained by combining Verbitsky's $\mathfrak{so}(5)$-action and the monodromy symmetry; see the proof of \cite[Corollary 3.5]{HM2}.
\end{rmk}

\begin{rmk}
Starting from the isomorphism $H^{i + j - 2n}(B, \CG_{i, k}) \simeq H^{i + j - 2n}(B, \CG_{k, i})$, we may sum over the index~$k$. On one hand, we get
\[
\begin{split}
\bigoplus_{k = 0}^{2n} H^{i + j - 2n}(B, \CG_{i, k}) & \simeq \bigoplus_{k = 0}^{2n}\gr_{-k}^FH^{i + j - 2n}(B, P_{i - n}\otimes_\BQ \BC[n - i]) \\
& \simeq H^{i + j - 2n}(B, P_{i - n}\otimes_\BQ \BC[n - i]) \\
& \simeq H^{j - n}(B, P_{i - n}\otimes_\BQ \BC).
\end{split}
\]
On the other hand, by \eqref{saito_formula} we have
\[
\begin{split}
\bigoplus_{k = 0}^{2n} H^{i + j - 2n}(B, \CG_{k, i}) & \simeq H^{i + j - 2n}\left(B, \bigoplus_{k = 0}^{2n}\gr_{-i}^F\DR(\CP_{k - n})[n - k]\right) \\
& \simeq H^{i + j -2n}\left(B, R\pi_*\Omega_M^i[2n - i]\right) \\
& \simeq H^j(M, \Omega_M^i).
\end{split}
\]
We see that Theorem~\ref{thm0.6} refines Theorem~\ref{thm1}.
\end{rmk}

\begin{rmk}
One can collect the numbers $h^{i, k ,d} := \dim H^{i, k, d}(M)$ and depict them in a~$3$-space. We call it the \emph{perverse--Hodge diamond} of $\pi\colon  M \to B$. For example, the $(d=2n)$-plane has the shape
\[
\begin{matrix}
& & h^{n,2n,2n}\\
& \reflectbox{$\ddots$} & \vdots & \ddots\\
h^{0, n, 2n} & \cdots & h^{n, n, 2n} & \cdots & h^{2n, n, 2n}.\\
& \ddots & \vdots & \reflectbox{$\ddots$}\\
& & h^{n, 0, 2n}
\end{matrix}
\]
To simplify the discussion we assume $b_2(M) \geq 5$. Then the perverse--Hodge algebra $\mathfrak{g}$ can be upgraded to an $\mathfrak{so}(7)$ by adding one more $\mathfrak{sl}_2$-triple, namely the one associated with an element~$\rho \in H^2(M, \BC)$ which is orthogonal to the four classes in \eqref{four} with respect to $q_M(-)$ and shares the same norm. The Weyl group of $\mathfrak{so}(7)$ is the full symmetry group of the regular octahedron, and as such it acts on the perverse--Hodge diamond (whereas the Weyl group of $\mathfrak{so}(6)$ acts as the subgroup of rotational symmetries). Meanwhile, it is expected that the perverse--Hodge diamond has precisely the shape of a regular octahedron, meaning that no nonzero numbers $h^{i, k ,d}$ lie outside the convex hull of the six vertices
\[
h^{0, 0, 0},\; h^{0, n, 2n},\; h^{n, 0, 2n},\; h^{n, 2n, 2n},\; h^{2n, n, 2n},\; h^{2n, 2n, 4n}.
\]
See also \cite[Conjecture 1.19]{GKLR} for an even stronger conjecture. This expectation is verified for all known families of compact irreducible symplectic varieties in \cite[Theorem~1.23]{GKLR}. For even cohomology $H^{\even}(M, \BC)$, the expectation is shown in \cite[Theorem~5.2]{GKLR} and \cite[Corollary~3.4]{HM2} to be equivalent to Nagai's conjecture for type II degenerations of hyper-K\"ahler manifolds deformation equivalent to $M$. It is verified for $2n \leq 8$ in \cite[Proposition 6.5]{GKLR}; more generally, the $(d = 2n)$-plane and $(d = 2n - 2)$-plane cases are proven in \cite[Theorem 1.2]{HM2}. (Alternatively, one can show the two cases using octahedral symmetry and the knowledge that the border of the Hodge diamond of $M$ only has $1$'s and $0$'s.)
\end{rmk}

\newcommand{\etalchar}[1]{$^{#1}$}

\end{document}